\font\elevensf=cmss10 scaled\magstephalf
\newtheorem{theorem} {{\elevensf THEOREM}}[section]
\newtheorem{proposition} {{\elevensf PROPOSITION}}[section]
\newtheorem{example} {{\elevensf EXAMPLE}}[section]
\newtheorem{remark} {{\elevensf REMARK}}[section]
\renewcommand\qed{$\blacksquare$}
\def\CC{{\rm \kern.24em \vrule width.02em height1.4ex depth-.05ex \kern-.26emC}}
\def\TagOnRight
\def\AA{{it I} \hskip-3pt{\tt A}}
\def\QQ{\rlap {\raise 0.4ex \hbox{$\scriptscriptstyle |$}} {\hskip -0.1em Q}}
\newcommand{\lb}{\left(}
\newcommand{\rb}{\right)}
\def\theequation{\@arabic{\c@section}.\@arabic{\c@equation}}
\begin{document}

\baselineskip 14pt
\parindent.4in
\catcode`\@=11 

\begin{center}
{\Huge \bf Dispersion tensor and its unique minimizer in 
Hashin-Shtrikman micro-structures}\\[5mm]
{\bf \textbf{Loredana B\u{a}lilescu, Carlos Conca, Tuhin Ghosh, Jorge San Mart{\'{\i}}n and
\\  Muthusamy Vanninathan}} 
\end{center} 

\begin{abstract}
\noindent
In this paper, we introduce the macroscopic quantity, namely the dispersion tensor
or the \textit{Burnett coefficient}s in the class of generalized Hashin-Shtrikman micro-structures
\cite[page no. 281]{T}. In the case of two-phase materials associated with the periodic 
Hashin-Shtrikman structures, we settle the issue that the dispersion tensor 
has an unique minimizer, which is so called Apollonian-Hashin-Shtrikman micro-structure.
\end{abstract}
\vskip .5cm\noindent
{\bf Keywords:} Dispersion tensor, Hashin-Shtrikman micro-structures, Apollonian gasket. 
\vskip .5cm
\noindent
{\bf Mathematics Subject Classification:} 74Q10; 78M40; 35B27

\section{Introduction}\label{intro}
\setcounter{equation}{0} 

The aim of this paper is to study the higher order approximation 
in the class of elliptic boundary value problems, where the heterogeneous 
media is governed by the well known generalized Hashin-Shtrikman 
micro-structures \cite{JKO}, \cite[page no. 281]{T}. To first order, we approximate 
the medium by the associated homogenized medium ``$q$''. 
First of all, while the macro tensor ``$q$'' has been introduced for arbitrary
micro-structures, the next order macro tensor, in particular, the fourth order 
dispersion tensor ``$d$'' is introduced so far only for periodic structures 
\cite{COVB,CMSV2,CMSV,CMSV3}. Thanks to the spectral approach to the homogenization 
problem using Bloch waves \cite{CV} which naturally leads to other 
macroscopic quantities apart from the ``$q$''.
Following our previous work \cite{TV1}, here we introduce the idea of higher 
order approximation among Hashin-Shtrikman structures, in particular, we define 
the dispersion tensor and denote it by ``$d_{HS}$''.
Note that, following by the its construction, Hashin-Shtrikman structures 
provide examples of non-periodic structures. More precisely, the inhomogeneous 
coefficients are invariant in a certain way of both translation and dilation of 
the medium with incorporating a family of small scales $\{\varepsilon_p>0\}$. 
Periodic micro-structure incorporates uniform translation and uniform dilation
with respect to only one scale $\varepsilon$, whereas Hashin-Shtrikman 
micro-structures incorporates non-uniform translation and dilation with a family 
of scales $\{\varepsilon_p\}$. It is an open problem to introduce ``$d$'' for more 
general non-periodic structures. One general result concerning ``$d$'' is that it 
has a sign irrespective of the underlying micro-structure and it is negative in 
contrast to ``$q$'' which is positive (see \cite{COVB}).\\
\begin{figure}
 \begin{center}
  \includegraphics[width=14cm]{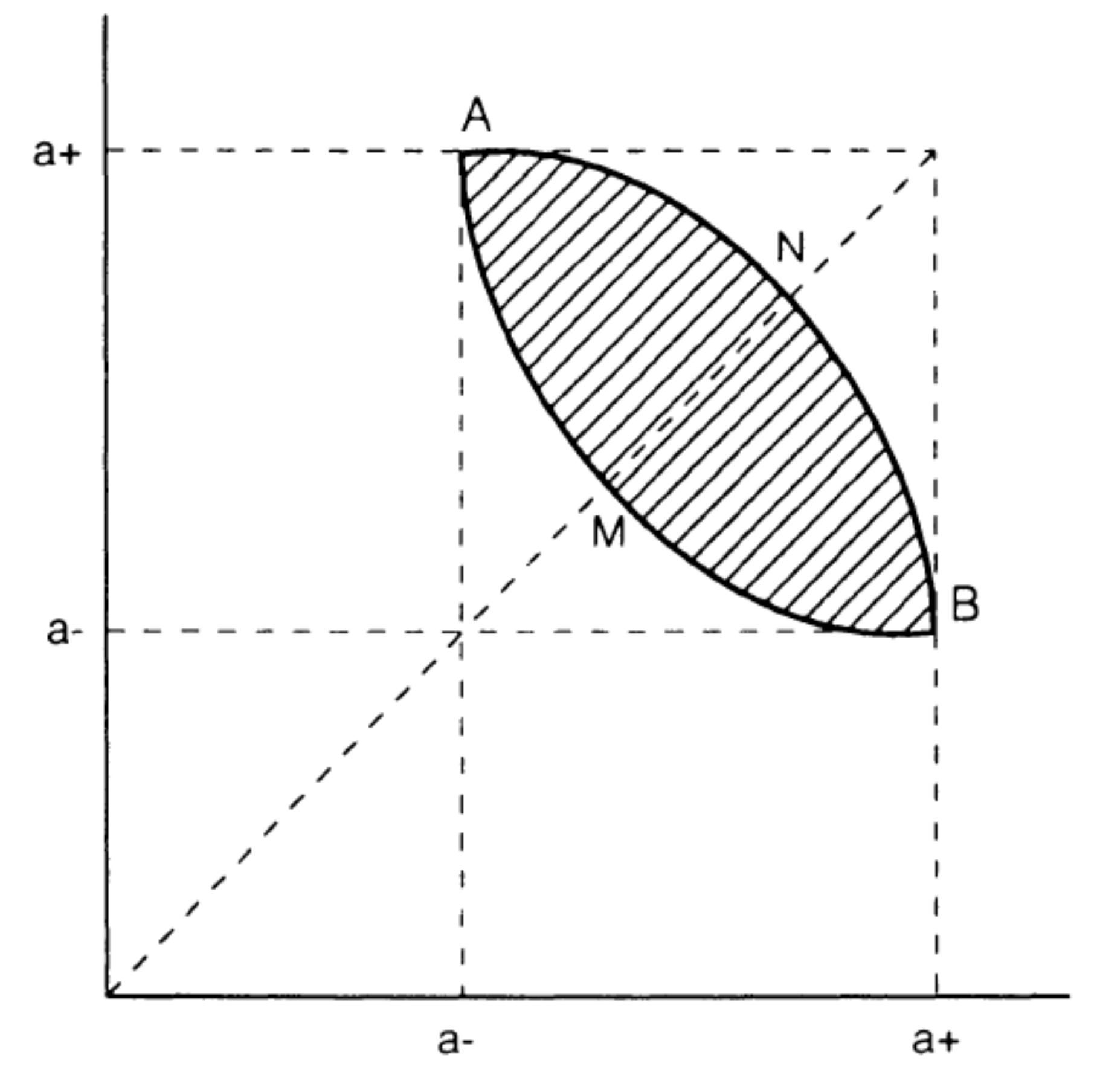}
  \vspace*{-12ex}\hspace*{-26ex}
  \caption{Murat--Tartar Bounds.}
 \end{center}
\end{figure}
 Let us mention the significance of studying of such dispersion tensor in
Hashin-Shtrikman structures. Motivated by Optimal Design Problems (ODP) (see \cite{A}),
one wishes to characterize the first macro tensor ``$q$'' for $2$-phase 
mixtures taken in a known proportion. As we know in the problem of characterizing 
conductivities, the crucial dependence comes from the way they are getting mixed
(micro-structures). Above task is carried out in a
well-known theorem in Homogenization Theory \cite{MT2,MT1}. 
Geometrically, conductivities of all possible mixtures lie in a convex 
lens-shaped region bounded by an upper hyperbola and a lower hyperbola. 
Analytically, conductivities of mixtures satisfy certain inequalities. 
There are two aspects in the theorem: first one is to
find  ``$q$'' corresponding to a given class of micro-structures and the second
one is to construct underlying micro-structures associated to a given
conductivity. Both laminates ($(A,B)$ in Figure 1) and Hashin-Shtrikman 
structures ($(M,N)$ in Figure 1) appear among others as
extremal structures in this characterization in the sense that they are on
the boundary of the above region. No wonder therefore that these structures have
played important role in the construction of the above set.
The goal is to be able to carry out a similar programme for the second
macro tensor ``$d$'' which is known as Bloch dispersion tensor. The name is due
to the fact that it appears in well-posed macro dispersive models
approximating the acoustic equation in periodic media \cite{ABM}. Thus, bounds on
``$d$'' will be useful in ODP in which one wishes to manipulate
micro-structures to have a desired dispersion coefficient. One is far away
from realizing such a goal.\\
First attempt in this endeavor is to see the 
modification that ``$d$'' brings to the phase diagram of ``$q$''. 
A current activity in this attempt is to study the variation of ``$d$'' 
on extremal structures of ``$q$'' and to characterize the range of its values 
on them. Recall that ``$q$'' remains constant when
restricted to such structures, whereas ``$d$'' varies on them. In such a
situation, it is particularly interesting to know its range of values on
them and to know the extremal structure chosen by ``$d$'' to realize its
maximum and minimum values. Such a study was completed in the case of
laminates in \cite{CMSV3}. Some surprises were found in our experience 
with one dimension case \cite{CMSV} and laminates. For instance, ``$d$'' 
picks up a unique element from such laminates at its maximum and minimum. 
Its maximum value is zero and the maximizer is 
a unique relaxed/generalized laminate. However, at its minimum value, the
minimizer is unique, but it is a classical structure and not a relaxed one.\\
Having studied periodic laminates, the next natural step is to treat
periodic Hashin-Shtrikman structures (PHS) and this is what we plan to do in our
present study. Just like molecules are made out of interacting atoms
inside, PHS consist of Hashin-Shtrikman structures inside the periodic cell $Y$.
Our idea is to study these ``atoms'' first. Of course, the major difficulty in this is
that they do not form a periodic structure in the usual sense. However,
there is still the invariance by the action of translation and dilation
groups but the action is now different.
One of the advantages of this work is that it enables us resolve one of
the conjectures (Section \ref{conj} below) regarding the optimal micro-structure, 
when we consider the macro dispersion coefficient on the class periodic 
Hashin-Shtrikman structures.

\section{Preliminaries}
\setcounter{equation}{0}
In the beginning, we remark that the
summation with respect to the repeated indices is understood throughout this paper. Let us start with the known periodic case.
\subsection{Dispersion tensor and periodic structures}
We consider the operator
\begin{equation*} 
\mathcal{A}_Y \equiv -\frac{\partial}{\partial y_k}\lb a^Y_{kl}(y)\frac{\partial}{\partial y_l} \rb ,\quad y\in\mathbb{R}^N, 
\end{equation*}
where the coefficient matrix  $A_Y(y) = [a^Y_{kl}(y)]$ defined on $Y$ a.e. with $Y =[0,1]^N$ is known as the periodic cell
and $A_Y\in\mathcal{M}(\alpha,\beta;Y)$ for some $0<\alpha <\beta$, i.e. 
\begin{equation*}a^Y_{kl}=a^Y_{lk}\hspace{5pt}\forall k,l \mbox{ and }\ (A_Y(y)\xi,\xi) \geq \alpha|\xi|^2,\ \ |A_Y(y)\xi|\leq \beta|\xi|\mbox{ for any } \xi \in \mathbb{R}^N,\mbox{ a.e. on }Y.\end{equation*}
For each $\varepsilon >0 $, we consider the $\varepsilon$-periodic elliptic operator
\begin{equation*} 
\mathcal{A}^{\varepsilon}_Y \equiv -\frac{\partial}{\partial x_k}\Big( a^Y_{kl}\Big(\frac{x}{\varepsilon}\Big)\frac{\partial}{\partial x_l} \Big) ,\quad x\in\mathbb{R}^N, 
\end{equation*}
where $x$ (slow variable) and $y$ (fast variable) are related by $y = \frac{x}{\varepsilon}$.\\
We now define the Bloch waves $\psi_Y$ associated with the operator $\mathcal{A}_Y$.
Let us consider the following spectral problem parametrized by $\eta\in\mathbb{R}^N$:
Find $\lambda_Y=\lambda_Y(\eta)\in \mathbb{R}$ and $\psi_Y=\psi_Y(y;\eta)$ (not zero) such that
\begin{align*}
\mathcal{A}_Y\psi_Y(\cdot;\eta)&= \lambda_Y(\eta)\psi_Y(\cdot;\eta) \quad\mbox{in }\mathbb{R}^N, 
\ \ \psi_Y(\cdot;\eta) \mbox{ is }(\eta;Y)\mbox{--periodic, i.e. }\\
\psi_Y(y+2\pi m;\eta) &= e^{2\pi i m\cdot\eta}\psi_Y(y;\eta) \quad \forall m \in \mathbb{Z}^N, \, y \in \mathbb{R}^N.
\end{align*}
Next, by  Floquet theory, we define $\varphi_Y(y;\eta)=e^{iy\cdot\eta}\psi_Y(y;\eta)$ 
to rewrite the above spectral problem as follows:
\begin{equation}\label{s1}
\mathcal{A}_Y(\eta)\varphi_Y= \lambda_Y(\eta)\varphi_Y \quad\mbox{in }\mathbb{R}^N,\ \ \ \varphi_Y \mbox{ is }Y\mbox{--periodic.}
\end{equation}
Here the operator $\mathcal{A}_Y(\eta)$ is called the translated operator and is defined by
$$\mathcal{A}_Y(\eta)= e^{-iy\cdot\eta}\mathcal{A}_Ye^{iy\cdot\eta} = -\Big(\frac{\partial}{\partial y_k} + i\eta_k\Big)\Big[a^Y_{kl}(y)\Big(\frac{\partial}{\partial y_l} + i\eta_l\Big)\Big].$$
It is well known that for $\eta\in Y^{\prime}= [-\frac{1}{2},\frac{1}{2}[^N$ the dual torus, the above spectral
problem \eqref{s1} admits a discrete sequence of eigenvalues and their eigenfunctions
referred to as Bloch waves introduced above enable us to describe the spectral resolution 
of $\mathcal{A}_Y$ an unbounded self-adjoint operator in $L^2(\mathbb{R}^N)$ in the 
orthogonal basis ͕$\{e^{iy\cdot\eta}\varphi_{Y,m}(y;\eta)| m\geq 1, \eta\in Y^{\prime}\}$.\\ 
To obtain the spectral resolution of $\mathcal{A}^{\varepsilon}_Y$, 
we introduce Bloch waves at the $\varepsilon$-scale as
$$\lambda_{Y,m}^{\varepsilon}(\xi)= \varepsilon^{-2}\lambda_{Y,m}(\eta),\quad \psi_{Y,m}^{\varepsilon}(x;\xi)=\psi_{Y,m}(y;\eta),\quad \varphi_{Y,m}^{\varepsilon}(x;\xi)=\varphi_{Y,m}(y;\eta),$$
where the variables $(x, \xi)$ and $(y, \eta)$ are related by $y = \frac{x}{\varepsilon}$ and $\eta =\varepsilon\xi$.
Observe that $\varphi^{\varepsilon}_{Y,m}(x;\xi)$ is $\varepsilon Y$--periodic (in $x$) and $\varepsilon^{-1}Y^{\prime}$--periodic 
with respect to $\xi$. In the same manner, $\psi^{\varepsilon}_{Y,m}(\cdot; \xi)$ is $(\varepsilon\xi; \varepsilon Y )$--periodic. 
The dual cell at $\varepsilon$-scale, where
$\xi$ varies, is $\varepsilon^{-1}Y^{\prime}$.\\
  
We consider a sequence $u^{\varepsilon} \in H^1(\mathbb{R}^N)$ satisfying 
\begin{equation}\label{ed1}
\mathcal{A}^{\varepsilon}_Yu^{\varepsilon} = f \quad\mbox{ in }\mathbb{R}^N,
\end{equation} 
with the fact $u^{\varepsilon}\rightharpoonup u$ in $H^{1}(\mathbb{R}^N)$ 
weak and $u^{\varepsilon}\rightarrow u $ in $L^{2}(\mathbb{R}^N)$ strong.\\
The homogenization problem consists of passing to the limit in \eqref{ed1}, 
as $\varepsilon \rightarrow 0$ and we get the homogenized equation 
satisfied by $u$, namely
\begin{equation*} \mathcal{A}^{*}_Yu =\ -\frac{\partial}{\partial x_k}\Big( q_{kl}\frac{\partial u}{\partial x_l} \Big) = f \quad\mbox{in }\mathbb{R}^N, \end{equation*}
where $A^{*}_Y=[q_{kl}]$ is the constant homogenized matrix (see \cite{A}).\\

Simple relation linking $A^{*}_Y$ with Bloch waves is the following: $q_{kl}=\frac{1}{2}D^2_{kl}\lambda_{Y,1}(0)$ (see \cite{CV}).
At this point, it is appropriate to recall that derivatives of the first eigenvalue
and eigenfunction at $\eta=0$ exist, thanks to the regularity property established in \cite{COV,CV}.
\begin{proposition}[Regularity of the ground state \cite{COV,CV}]\label{based}
Under the periodic assumption on the matrix $A_Y\in \mathcal{M}(\alpha,\beta; Y)$, there  exists $\delta > 0$
such that the first eigenvalue $\lambda_{Y,1}(\eta)$ is an analytic function on 
$B_{\delta}(0) = \{\eta\in\mathbb{R}^N \ |\ |\eta| < \delta\}$ and
there is a choice of the first eigenvector $\varphi_{Y,1}(y;\eta)$ satisfying
\begin{equation*}
\eta \mapsto \varphi_{Y,1}(\cdot;\eta) \in H^1_{\#}(Y)\mbox{ is analytic on } B_{\delta}\mbox{ and }\ \varphi_{Y,1}(y;0)= |Y|^{-1/2}.
\end{equation*}
Moreover, we have the following relations:
\begin{equation*}\begin{aligned}
& \lambda_{Y,1}(0) = 0,\quad D_k \lambda_{Y,1}(0) = \frac{\partial\lambda_{Y,1}}{\partial \eta_k}(0) = 0 \quad\forall k = 1,..,N.\\
& \varphi_{Y,1}(\cdot,0)= |Y|^{-1/2},\quad D_k\varphi_{Y,1}(\cdot,0) = i|Y|^{-1/2}\chi_k(y),\\ 
&\frac{1}{2}D^2_{kl}\lambda_{Y,1}(0) = \frac{1}{2}\frac{\partial^2\lambda_{Y,1}}{\partial \eta_k\partial\eta_l}(0) = q_{kl} \quad\forall k,l =1,\ldots,N,
\end{aligned}
\end{equation*}
where the last expression is considered as the Bloch spectral representation of the homogenized tensor,
which are essentially defined as
\begin{equation}\label{ed2} 
 \frac{1}{2}D^2_{kl}\lambda_{Y,1}(0) = q_{kl}=\  \frac{1}{|Y|}\int_{Y} A_Y(\nabla \chi_k + e_k)\cdot(\nabla \chi_l + e_l) dy, 
\end{equation}
 for each unit vector $e_{k}$ and the functions $\chi_k\in H^1_{\#}(Y)$ solving the following conductivity problem in the periodic unit cell: 
\begin{equation*}
-div_y\lb A_Y(y)(\nabla_y\chi_k(y)+e_k)\rb = 0 \quad\mbox{in }Y,\quad y \longmapsto\chi_k(y) \quad\mbox{is $Y-$periodic.}
\end{equation*}
Moreover, all odd order derivatives of $\lambda_{Y,1} $ at $\eta=0$ are zero, i.e. 
\begin{equation*}D^{q}\lambda_{Y,1}(0)=0 \quad\forall q\in \mathbb{Z}^N_{+}, \, |q|\mbox{ odd.} 
\end{equation*}
Additionally, all even order derivatives need not to be zero and can be calculated in a systematic way.\\
The fourth order derivative of $\lambda_{Y,1}$ at $\eta=0$ is non-zero and 
known as the \textbf{Burnett coefficient} or the \textbf{dispersion tensor} $d_{Y}$ of the medium,
\begin{equation}\label{dl22}
 \frac{1}{4!}\frac{\partial^4\lambda_{Y,1}(0)}{\partial\eta_k\partial\eta_l\partial\eta_m\eta_n}\eta_k\eta_l\eta_m\eta_n = d_{klmn}\eta_k\eta_l\eta_m\eta_n =\ d_{Y}\eta^4,
\end{equation}
which is essentially a non-positive definite fourth order tensor and can be expressed as follows:
let us call $D^2_{kl}\varphi_{Y,1}(\cdot;0) = |Y|^{-1/2}\chi_{kl}$ and define 
\begin{equation}\begin{aligned}\label{dl19}
C_Y &=\eta_n C^Y_n \quad\mbox{ with }\quad C^Y_n(\varphi) = - a^Y_{nj}(y)\frac{\partial \varphi}{\partial y_j} - \frac{\partial}{\partial y_j}(a^Y_{nj}(y)\varphi), \\
X^{(1)}_Y &= \eta_n\chi_{n},\quad X^{(2)}_Y= \eta_k\eta_n\chi_{kn},\quad  \widetilde{A_Y}= \eta_k\eta_n a^Y_{kn},\quad \widetilde{A^{*}_Y}=\eta_k\eta_n q_{kn}
\end{aligned}\end{equation}
satisfying
\begin{equation*}
-div(A_Y\nabla X^{(1)}_Y) = \eta_k\frac{\partial a^Y_{kl}}{\partial y_l} \mbox{ in }Y, \quad X^{(1)}_Y\in H^1_{\#}(Y) \mbox{ with }\int_Y X^{(1)}_Y dy = 0
\end{equation*} 
and  
\begin{equation}\label{ll9}
-div(A_Y\nabla X^{(2)}_Y) = (\widetilde{A_Y}-\widetilde{A^{*}_Y}) - C_Y X^{(1)}_Y \mbox{ in }Y, \quad X^{(2)}_Y\in H^1_{\#}(Y) \mbox{ with }\int_{Y} X^{(2)}_Ydy =0.
\end{equation}
Then, by summation, one has the following expression of the dispersion tensor: 
\begin{equation}\label{dy}
d_{Y}\eta^4 = \ -\frac{1}{|Y|}\int_{Y} \mathcal{A}_Y\Big( X^{(2)}_Y -\frac{(X^{(1)}_Y)^2}{2}\Big)\cdot\Big( X^{(2)}_Y -\frac{(X^{(1)}_Y)^2}{2}\Big)\leq 0.
\end{equation}
\end{proposition}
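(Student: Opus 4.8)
The plan is to derive every assertion from analytic perturbation theory for the translated operator $\mathcal{A}_Y(\eta)$, together with a direct expansion of the eigenvalue equation \eqref{s1} at $\eta=0$. First I would note that $\eta\mapsto\mathcal{A}_Y(\eta)$ is a polynomial (in fact quadratic) family of self-adjoint operators on $L^2(Y)$ with the $\eta$-independent form domain $H^1_{\#}(Y)$, hence a holomorphic family of type (B) in Kato's sense thanks to the bounds defining $\mathcal{M}(\alpha,\beta;Y)$; and that at $\eta=0$ the operator $\mathcal{A}_Y(0)=\mathcal{A}_Y$ has $0$ as a simple isolated eigenvalue, its kernel on $H^1_{\#}(Y)$ being exactly the constants (by ellipticity). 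Kato's theory then yields $\delta>0$ so that on $B_\delta(0)$ the first branch $\lambda_{Y,1}(\eta)$ stays simple and analytic with an analytic spectral projection; imposing $\|\varphi_{Y,1}(\cdot;\eta)\|_{L^2(Y)}=1$ together with the phase condition $\langle\varphi_{Y,1}(\cdot;\eta),\varphi_{Y,1}(\cdot;0)\rangle\in\mathbb{R}_{>0}$ makes $\eta\mapsto\varphi_{Y,1}(\cdot;\eta)\in H^1_{\#}(Y)$ analytic with $\varphi_{Y,1}(y;0)=|Y|^{-1/2}$, and differentiating these two normalizations at $\eta=0$ supplies the mean-zero conditions on $D^q\varphi_{Y,1}(\cdot;0)$ used below (e.g.\ $\int_Y\chi_k\,dy=0$).

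Next I would extract the derivative identities by writing $\mathcal{A}_Y(\eta)=\mathcal{A}_Y+iC_Y+\widetilde{A_Y}$ with $C_Y,\widetilde{A_Y}$ as in \eqref{dl19} and differentiating $\mathcal{A}_Y(\eta)\varphi_{Y,1}(\cdot;\eta)=\lambda_{Y,1}(\eta)\varphi_{Y,1}(\cdot;\eta)$ repeatedly at $\eta=0$. Order zero gives $\lambda_{Y,1}(0)=0$. At order one, integrating the identity over $Y$ annihilates $\int_Y\mathcal{A}_Y(\cdot)$ and $\int_Y\partial_l a^Y_{kl}$, forcing $D_k\lambda_{Y,1}(0)=0$, while the surviving equation is exactly the conductivity cell problem, so writing $D_k\varphi_{Y,1}(\cdot;0)=i|Y|^{-1/2}\chi_k$ identifies $\chi_k$ with the corrector of \eqref{ed2}. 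At order two, the solvability condition (pairing with the constant) yields $\tfrac12 D^2_{kl}\lambda_{Y,1}(0)$ as a bilinear form in the correctors; testing the cell equations against each other and using $a^Y_{kl}=a^Y_{lk}$ turns it into $q_{kl}=\tfrac1{|Y|}\int_Y A_Y(\nabla\chi_k+e_k)\cdot(\nabla\chi_l+e_l)\,dy$, and the order-two balance itself (with the mean-zero normalization) is the equation \eqref{ll9} for $X^{(2)}_Y$, so $D^2_{kl}\varphi_{Y,1}(\cdot;0)=|Y|^{-1/2}\chi_{kl}$. For the odd derivatives I would use that $A_Y$ is real, whence $\overline{\mathcal{A}_Y(\eta)\varphi}=\mathcal{A}_Y(-\eta)\overline{\varphi}$ and $\sigma(\mathcal{A}_Y(-\eta))=\sigma(\mathcal{A}_Y(\eta))$; the simplicity near $0$ then forces $\lambda_{Y,1}(-\eta)=\lambda_{Y,1}(\eta)$ on $B_\delta(0)$, so $\lambda_{Y,1}$ is even and all odd-order derivatives vanish at $0$, leaving $\lambda_{Y,1}(\eta)=q_{kl}\eta_k\eta_l+d_{klmn}\eta_k\eta_l\eta_m\eta_n+O(|\eta|^6)$.

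The substantial step is \eqref{dy}. Here I would fix a direction $\eta$, plug the Taylor polynomials $\varphi_{Y,1}(\cdot;\eta)=|Y|^{-1/2}\lb 1+iX^{(1)}_Y+\tfrac12 X^{(2)}_Y+\cdots\rb$ and $\lambda_{Y,1}(\eta)=\widetilde{A^{*}_Y}+d_Y\eta^4+\cdots$ into $(\mathcal{A}_Y+iC_Y+\widetilde{A_Y})\varphi_{Y,1}=\lambda_{Y,1}\varphi_{Y,1}$, and collect homogeneous orders in $\eta$: orders one and two reproduce the stated equations for $X^{(1)}_Y$ and $X^{(2)}_Y$, order three vanishes by the previous paragraph, and at order four the solvability condition (integrating against the constant over $Y$) isolates $d_Y\eta^4$. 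One then reorganizes the resulting expression by integrating by parts and repeatedly invoking the lower-order equations, the self-adjointness of $\mathcal{A}_Y(\eta)$, the symmetry $a^Y_{kl}=a^Y_{lk}$, and the normalizations $\int_Y X^{(1)}_Y\,dy=\int_Y X^{(2)}_Y\,dy=0$; all cross-terms cancel and one is left with the single quadratic form $d_Y\eta^4=-\tfrac1{|Y|}\int_Y\mathcal{A}_Y w\cdot w\,dy$ with $w=X^{(2)}_Y-\tfrac12(X^{(1)}_Y)^2$, i.e.\ \eqref{dy}. The sign is then immediate: $\int_Y\mathcal{A}_Y w\cdot w\,dy=\int_Y A_Y\nabla w\cdot\nabla w\,dy\ge\alpha\int_Y|\nabla w|^2\,dy\ge0$, so $d_Y\eta^4\le0$ and $d_Y$ is non-positive definite.

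The main obstacle is this last step: the fourth-order Rayleigh--Schr\"odinger bookkeeping produces a large number of cross-terms, and the delicate point is to see that they regroup into the single perfect square $-\langle\mathcal{A}_Y w,w\rangle_{L^2(Y)}$ — in particular that the right second-order object is $w=X^{(2)}_Y-\tfrac12(X^{(1)}_Y)^2$ rather than $X^{(2)}_Y$ alone. Closing this cancellation relies essentially on the mean-zero normalizations fixed in the first step and on the self-adjointness of the translated operator, and it is the only place where more than routine computation is needed; everything else (analyticity, the lower-order identities, the evenness argument) is standard once the perturbation-theoretic framework is in place.
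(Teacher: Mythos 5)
Your proposal is correct and follows essentially the same route that this proposition rests on in the paper: the paper itself offers no proof but quotes \cite{COV,CV} and \cite[Proposition 3.2]{COVB}, whose argument is exactly your scheme — Kato--Rellich analyticity of the translated family $\mathcal{A}_Y(\eta)$ near the simple eigenvalue $0$, successive differentiation of the eigenvalue equation with solvability conditions giving $\chi_k$, $q_{kl}$ and \eqref{ll9}, evenness of $\lambda_{Y,1}$ from the realness of $A_Y$, and the fourth-order bookkeeping that regroups into the quadratic form in $X^{(2)}_Y-\tfrac12(X^{(1)}_Y)^2$ (the same scheme the paper replays in Section 3 for the Hashin--Shtrikman case). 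One small caveat: with your normalization $\|\varphi_{Y,1}(\cdot;\eta)\|_{L^2(Y)}=1$ the mean of $D^2_{kl}\varphi_{Y,1}(\cdot;0)$ equals $-|Y|^{-1/2}\cdot\frac{1}{|Y|}\int_Y\chi_k\chi_l\,dy\neq0$, so to get literally $D^2_{kl}\varphi_{Y,1}(\cdot;0)=|Y|^{-1/2}\chi_{kl}$ with $\int_Y\chi_{kl}\,dy=0$ one should instead fix the scalar factor so that all corrector terms are mean-free; this changes $X^{(2)}_Y$ only by an additive constant, which is annihilated by $\mathcal{A}_Y$ and therefore affects neither the derivatives of $\lambda_{Y,1}$ nor the formula \eqref{dy}.
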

\begin{remark}
In order to see the role of the dispersion tensor $d_Y$ arises in wave propagation problems, let us consider the wave propagation problem in  periodic
structure governed by the operator $\partial_{tt} + \mathcal{A}^{\varepsilon}_Y$
with appropriate initial conditions. As we see,
we have\begin{align*}
\lambda_{Y,1}^{\varepsilon}(\xi) &\approx\ \frac{1}{2!} \lambda_{Y,1}^{(2)}(0)\xi^2 \quad\mbox{if }\varepsilon^2|\xi|^4 \mbox{ is small,}\\
\lambda_{Y,1}^{\varepsilon}(\xi) &\approx\ \frac{1}{2!}\lambda_{Y,1}^{(2)}(0)\xi^2 + \frac{1}{4!}\varepsilon^2\lambda_{Y,1}^{(4)}(0)\xi^4 \quad\mbox{if }\varepsilon^4|\xi|^6 \mbox{ is small}.
\end{align*}
Thus, if we consider short waves of low energy with wave number satisfying 
$\varepsilon^2|\xi|^4 = \mathcal{O}(1)$ and $\varepsilon^4|\xi|^6 = o(1)$,
then a simplified description is obtained with the operator 
$\partial_{tt} + \mathcal{A}^{*}_Y + \varepsilon^2D_Y$,
where $D_Y$ is the fourth-order operator whose symbol is 
$\frac{1}{4!}\frac{\partial^4\lambda_{Y,1}(0)}{\partial\eta_k\partial\eta_l\partial\eta_m\eta_n}\xi_k\xi_l\xi_m\xi_n$.
\end{remark}

\subsection{Survey of Bloch waves, Bloch eigenvalues and eigenvector 
in Hashin-Shtrikman structure}
In this part, we recall our recent work \cite{TV1} of introducing Bloch waves and associated 
Bloch spectral analysis in the class of generalized Hashin-Shtrikman micro-structures concerning the homogenization result.
\subsection*{Hashin-Shtrikman micro-structures}
We follow \cite[page no. 281]{T} in this sequel. 
Let $\omega \subset \mathbb{R}^N$ be a bounded open subset with Lipschitz boundary.
Let $A_\omega(y)=[a^{\omega}_{kl}(y)]_{1\leq k,l\leq N} \in \mathcal{M}(\alpha,\beta;\omega)$  
be such that after extending $A_\omega$ by $A_\omega(y) = M$ for $x \in \mathbb{R}^N\smallsetminus \omega$, where $M \in L_{+}(\mathbb{R}^N ; \mathbb{R}^N)$
(i.e. $M = [m_{kl}]_{1\leq k,l \leq N}$ is a constant positive definite $N\times N$ matrix), 
if for each $\lambda \in \mathbb{R}^N$ there exists $w_{\lambda}\in  H^{1}_{loc}(\mathbb{R}^N)$ satisfying
\begin{equation}\label{hsw}
- div (A_{\omega}(y)\nabla w_{\lambda}(y)) = 0 \quad\mbox{in }\mathbb{R}^N,\quad  w_{\lambda}(y) = (\lambda, y) \quad\mbox{in }\mathbb{R}^N \smallsetminus \omega,
\end{equation}
then $A$ is said to be \textit{equivalent} to $M$.\\
\\
Then one uses a sequence of Vitali coverings of $\Omega$ by reduced copies of $\omega,$
\begin{equation}\label{ll1}  meas\big(\Omega \smallsetminus \underset{p\in K}{\cup} (\varepsilon_{p,n}\omega + y^{p,n})\big) = 0, \mbox{ with } \kappa_n = \underset{p\in K}{sup}\hspace{2pt} \varepsilon_{p,n}\rightarrow 0,
\end{equation}
for a finite or countable $K$. These define the micro-structures in $A^{n}_\omega$. One defines for almost everywhere $x\in \Omega$, 
\begin{equation}\label{hs} 
A^{n}_\omega(x) = A_\omega\Big(\frac{x - y^{p,n}}{\varepsilon_{p,n}}\Big)\ \ \mbox{ in } \varepsilon_{p,n}\omega + y^{p,n},\quad p\in K,
\end{equation}
which makes sense since, for each $n$, the sets $\varepsilon_{p,n}\omega + y^{p,n}, p\in K$ are disjoint.
The above construction \eqref{hs} represents the so called \textbf{Hashin-Shtrikman} micro-structures.\\
\\
Following that, one defines $v^{n} \in H^1(\Omega)$ by
\begin{equation}\label{uB}
v^{n}(x) = \varepsilon_{p,n}w_{\lambda}\Big(\frac{x-y^{p,n}}{\varepsilon_{p,n}}\Big)+ (\lambda, y^{p,n})\quad\mbox{ in  }\varepsilon_{p,n}\omega + y^{p,n}.
\end{equation}
Then one has the following properties (see \cite[Page no. 283]{T}):
\begin{equation}\begin{aligned}\label{uC}
 v^{n}(x) &\rightharpoonup (\lambda,x) \mbox{ weakly in }H^1(\Omega;\mathbb{R}^N),\\
 A^{n}_\omega\nabla v^{n}(x) &\rightharpoonup M\lambda \mbox{ weakly in } L^2(\Omega;\mathbb{R}^N),\\
 - div( A^{n}_\omega(x)\nabla v^{n}(x)) &=\ 0 \ \mbox{ in }\Omega.
\end{aligned}\end{equation}
So, by the definition of $H$-convergence (see \cite[Page no. 82]{T}), one has the following convergence of the entire sequence
\begin{equation} A^{n}_\omega \xrightarrow{H-\mbox{converges }} M, 
\end{equation}
where $M\in L_{+}(\mathbb{R}^N,\mathbb{R}^N)$ is a positive definite matrix equivalent to $A$.\\
\\
We have the following integral representation similar to \eqref{ed2}:
\begin{equation}\begin{aligned}\label{dl12}
Me_k\cdot e_l=\ m_{kl} &=\ \frac{1}{|\omega|}\int_{\omega} A_\omega(y)\nabla w_{e_k}(y)\cdot\nabla w_{e_l}(y)\ dy \\
                       &=\ \frac{1}{|\omega|}\int_{\omega} A_\omega(y)\nabla w_{e_k}(y)\cdot e_l\ dy, 
\end{aligned}\end{equation}
where $w_{e_k}, w_{e_l}$ are the solution of \eqref{hsw} for $\lambda= e_k$ and $\lambda=e_l$, respectively.
\hfill\qed
\begin{example}
[Spherical Inclusions in two-phase medium] \label{si}
If $\omega= B(0,1)=\{ y\ | \ |y|\leq 1\} $ and
\begin{equation*}
     A_\omega(y)=a_B(r)I=
        \left\{
     \begin{array}{ll} 
        \alpha I \quad\mbox{if } |y| \leq R,\\[1ex]
        \beta I \quad\mbox{if }  R < |y| \leq 1,
        \end{array}
        \right.
\end{equation*}
$\alpha$ and $\beta$ are known as core and coating, respectively. Then $A_\omega$ is equivalent to $\gamma I$, where $\gamma$ satisfies 
\begin{equation*}
\frac{\gamma - \beta}{\gamma + (N-1)\beta} = \theta \frac{\alpha - \beta}{\alpha + (N-1)\beta},\ \mbox{ with }\theta= R^N .
\end{equation*} 
\end{example}
\begin{example}[Elliptical Inclusions in two-phase medium]\label{ei}
For $m_1,\ldots,m_N\in \mathbb{R}$ and $\rho + m_j >0 $ for $j=1,\ldots,N$, the 
family of confocal ellipsoids $S_\rho$ of equation
$$ \sum_{j=1}^N \frac{y^2_j}{\rho + m_j} = 1$$ defines implicitly a
real function $\rho$, outside a possibly degenerate ellipsoid in
a subspace of dimension $<$ $N$.\\
Now, if we consider $\omega=E_{\rho_2+m_1,\ldots,\rho_2+m_N}= \Big\{ y\ | \ \sum\limits_{j=1}^N \frac{y^2_j}{\rho_2 + m_j} \leq 1\Big\},$ 
with $\rho_2 + \underset{j}{min}\ m_j >0 $  and  
\begin{equation*}
 A_\omega(y)= a_E(\rho)I =
 \left\{ 
 \begin{array}{ll}
 \alpha I \quad\mbox{if } \rho \leq \rho_1,\\[1ex]
 \beta I \quad\mbox{if }  \rho_1 < \rho \leq \rho_2,
 \end{array}
 \right. 
\end{equation*}
then $A_\omega$ is equivalent to a constant diagonal matrix $\Gamma= [\gamma_{jj}]_{1\leq j\leq N}$ satisfying
$$ \sum_{j=1}^N \frac{1}{\beta - \gamma_{jj}} =\ \frac{(1-\theta)\alpha + (N+\theta-1)\beta}{\theta \beta(\beta-\alpha)},\ \mbox{ with }\theta = \underset{j}{\Pi}\ \sqrt{\frac{\rho_1 + m_j}{\rho_2 + m_j}}.$$ 
\end{example}
\hfill\qed
\subsection*{Bloch waves, Bloch eigenvalues and eigenvectors associated with the Hashin-Shtrikman structures}
Let $\omega \subset \mathbb{R}^N$ be a bounded open domain with Lipschitz boundary and $A_\omega(y)=[a^\omega_{kl}(y)]_{1\leq k,l\leq N}\in \mathcal{M}(\alpha,\beta,\omega)$. 
We consider the following spectral problem parameterized by $\eta \in \mathbb{R}^N$: 
Find $\lambda_\omega := \lambda_\omega(\eta) \in \mathbb{C}$ and $\varphi_\omega := \varphi_\omega(y; \eta)$ (not identically zero) such that
\begin{equation}\begin{aligned}\label{NE1}
\mathcal{A}_\omega(\eta)\varphi_\omega(y;\eta)=\ -\Big(\frac{\partial}{\partial y_k} + i\eta_k\Big)&\Big[a^\omega_{kl}(y)\Big(\frac{\partial}{\partial y_l} + i\eta_l\Big)\Big]\varphi_\omega(y;\eta) = \lambda_\omega(\eta)\varphi_\omega(y;\eta) \mbox{ in }\omega, \\
\varphi_\omega(y;\eta)\mbox{ is constant on }\partial\omega &\mbox{ and }
\int_{\partial\omega} a^\omega_{kl}(y)\Big(\frac{\partial}{\partial y_l} + i\eta_l\Big)\varphi_\omega(y;\eta)\nu_k\ d\sigma = 0, 
\end{aligned}\end{equation}
where $\nu$ is the outer normal vector on the boundary and $d\sigma$ is the surface measure on $\partial\omega$.
\\
\noindent
We introduce the state spaces of this above spectral problem:
\begin{align*}
L^2_c(\omega) &=  \ \{\varphi\in L^2_{loc}(\mathbb{R}^N) \  | \ \varphi \mbox{ is constant in } \mathbb{R}^N \smallsetminus\omega\},\\
H^1_c(\omega) &=\ \{\varphi\in H^1_{loc}(\mathbb{R}^N) \  | \ \varphi \mbox{ is constant in } \mathbb{R}^N \smallsetminus\omega\}\\
              &= \ \{\varphi\in H^1(\omega) \  | \ \varphi|_{\partial\omega} = \mbox{ constant}\}.
\end{align*}
Here, “$c$” is a floating constant depending on the element under consideration.
$L^2_c(\omega)$ and $H^1_c(\omega)$ are proper subspace of $L^2(\omega)$ and  $H^1(\omega)$ respectively, and they inherit
the subspace norm-topology of the parent space. 

Prior to that, we have this following result establishing the existence of the Bloch eigenelements.
\begin{proposition}[Existence result \cite{TV1}]\label{dl1}
Fix $\eta \in \mathbb{R}^N$. Then, there exist a sequence of eigenvalues $\{ \lambda_{\omega,m}(\eta); m \in \mathbb{N} \}$
and its corresponding eigenvectors $\{ \varphi_{\omega,m} (y; \eta)\in H^1_c(\omega), m \in \mathbb{N} \}$ such that
\begin{align*}
&(i) \ \mathcal{A}_\omega(\eta)\varphi_{\omega,m}(y;\eta) = \lambda_{\omega,m}(\eta)\varphi_{\omega,m}(y;\eta) \quad \forall m \in \mathbb{N}.\\ 
&(ii) \ 0 \leq \lambda_{\omega,1}(\eta) \leq \lambda_{\omega,2} (\eta) \leq \ldots \rightarrow \infty; \ \mbox{each eigenvalue is of finite multiplicity. }\\
&(iii)\ \{\varphi_{\omega,m}(\cdot;\eta);\ m \in  \mathbb{N} \}\ \mbox{ is an orthonormal basis for }L^2_c(\omega).\\
&(iv)\ \mbox{ For }\phi \mbox{ in the domain of } \mathcal{A}_\omega(\eta), \mbox{ we have }\\
&\hspace{3cm}\mathcal{A}_\omega(\eta)\phi(y) = \sum_{m=1}^{\infty} \lambda_{\omega,m} (\eta)\big(\phi, \varphi_{\omega,m} (\cdot;\eta)\big)\varphi_{\omega,m} (y;\eta).
\end{align*}
\end{proposition}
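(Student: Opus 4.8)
The statement is a spectral-theory fact, and the plan is to deduce it from the spectral theorem for operators with compact resolvent, applied to a suitable realisation of the translated operator $\mathcal{A}_\omega(\eta)$. First I would introduce, on the form domain $H^1_c(\omega)$, the sesquilinear form
\[
a_\eta(\varphi,\psi)\ =\ \int_\omega a^\omega_{kl}(y)\,\Big(\frac{\partial}{\partial y_l}+i\eta_l\Big)\varphi\ \overline{\Big(\frac{\partial}{\partial y_k}+i\eta_k\Big)\psi}\ dy ,
\]
and check, by Green's formula, that it is precisely the weak form of \eqref{NE1}: testing against $\psi\in C_c^\infty(\omega)$ gives $\mathcal{A}_\omega(\eta)\varphi=\lambda\varphi$ in $\mathcal{D}'(\omega)$, while testing against a general $\psi\in H^1_c(\omega)$, which is \emph{constant} (say $=c_\psi$) on $\partial\omega$, produces the extra term $\bar c_\psi\int_{\partial\omega} a^\omega_{kl}(\partial_l+i\eta_l)\varphi\,\nu_k\,d\sigma$; demanding that this vanish for every $c_\psi$ is exactly the flux condition in \eqref{NE1}. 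Thus the pair $\big(a_\eta,\,H^1_c(\omega)\big)$ encodes problem \eqref{NE1} with its ``constant trace plus vanishing total flux'' boundary conditions as the natural ones. Since $A_\omega=[a^\omega_{kl}]$ is real and symmetric, $a_\eta$ is Hermitian, and $|A_\omega|\le\beta$ makes it bounded on $H^1_c(\omega)$.

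Next I would prove a G{\aa}rding inequality. From $A_\omega\ge\alpha I$ one gets $a_\eta(\varphi,\varphi)\ge\alpha\int_\omega|\nabla\varphi+i\eta\varphi|^2$, and since $\int_\omega|\nabla\varphi|^2\le 2\int_\omega|\nabla\varphi+i\eta\varphi|^2+2|\eta|^2\int_\omega|\varphi|^2$, this yields, with $\Lambda:=\alpha\big(|\eta|^2+\tfrac12\big)$,
\[
a_\eta(\varphi,\varphi)+\Lambda\,\|\varphi\|_{L^2(\omega)}^2\ \ge\ \tfrac{\alpha}{2}\,\|\varphi\|_{H^1(\omega)}^2\qquad\forall\,\varphi\in H^1_c(\omega).
\]
In particular $a_\eta\ge0$, which will force $\lambda_{\omega,m}(\eta)\ge0$. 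By Lax--Milgram the map $T_\eta\colon L^2_c(\omega)\to H^1_c(\omega)$ sending $f$ to the unique solution $u$ of $a_\eta(u,v)+\Lambda(u,v)=(f,v)$ for all $v\in H^1_c(\omega)$ is well defined and bounded; since $a_\eta+\Lambda(\cdot,\cdot)$ is Hermitian and $H^1_c(\omega)$ is dense in $L^2_c(\omega)$ (it contains $C_c^\infty(\omega)$), $T_\eta$ is self-adjoint, positive and injective as an operator on $L^2_c(\omega)$.

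The decisive point is the compactness of $T_\eta$ on $L^2_c(\omega)$. For this I would note that $H^1_c(\omega)$ is a \emph{closed} subspace of $H^1(\omega)$ --- the trace operator $H^1(\omega)\to H^{1/2}(\partial\omega)$ is continuous and ``constant on $\partial\omega$'' is a closed condition --- so that the Rellich--Kondrachov compact embedding $H^1(\omega)\hookrightarrow\hookrightarrow L^2(\omega)$, valid because $\omega$ is bounded with Lipschitz boundary, restricts to a compact embedding $H^1_c(\omega)\hookrightarrow\hookrightarrow L^2_c(\omega)$; composing with $T_\eta\colon L^2_c(\omega)\to H^1_c(\omega)$ shows $T_\eta$ is compact. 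The spectral theorem for compact, self-adjoint, positive, injective operators then gives a nonincreasing sequence $\mu_1\ge\mu_2\ge\cdots>0$ with $\mu_m\to0$, each of finite multiplicity, together with an orthonormal basis $\{\varphi_{\omega,m}(\cdot;\eta)\}_{m\ge1}$ of $L^2_c(\omega)$ consisting of eigenvectors of $T_\eta$; this is (iii). Setting $\lambda_{\omega,m}(\eta):=\mu_m^{-1}-\Lambda$ gives $0\le\lambda_{\omega,1}\le\lambda_{\omega,2}\le\cdots\to\infty$ with finite multiplicities, and unwinding the definition of $T_\eta$ shows $a_\eta(\varphi_{\omega,m},v)=\lambda_{\omega,m}(\varphi_{\omega,m},v)$ for all $v$, i.e. $\mathcal{A}_\omega(\eta)\varphi_{\omega,m}=\lambda_{\omega,m}\varphi_{\omega,m}$ in the weak sense, which is (i)--(ii); interior elliptic regularity then lets \eqref{NE1} hold a.e.\ in $\omega$. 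Finally (iv) is just the expansion of $\phi$ in the domain of $\mathcal{A}_\omega(\eta)$ in this orthonormal basis, on which $\mathcal{A}_\omega(\eta)$ acts diagonally.

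I expect the routine parts to be Lax--Milgram and the spectral theorem, while the step requiring genuine care is the first one: identifying $\big(a_\eta,\,H^1_c(\omega)\big)$ with the boundary value problem \eqref{NE1} so that its somewhat unusual boundary conditions emerge as the natural ones, together with the verification that $H^1_c(\omega)$ is closed in $H^1(\omega)$ so that Rellich's theorem transfers. This is precisely where the Hashin--Shtrikman geometry (a single bounded Lipschitz inclusion $\omega$ rather than a periodicity cell) enters the argument.
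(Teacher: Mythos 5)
Your proposal is correct and follows essentially the same route the paper relies on: the paper does not reprove this proposition (it is quoted from \cite{TV1}) but sets up exactly your framework --- the weak formulation of \eqref{NE1} on the spaces $L^2_c(\omega)$, $H^1_c(\omega)$ (cf.\ the analogous formulation \eqref{dl8} for $\Omega$) --- and the existence result is obtained, as you do, by Lax--Milgram with a G{\aa}rding-type shift, compactness of the resolvent from the Rellich--Kondrachov embedding of the closed subspace $H^1_c(\omega)\subset H^1(\omega)$, and the spectral theorem for compact self-adjoint operators. Your identification of the ``constant trace plus vanishing total flux'' conditions as the natural boundary conditions of the form is precisely the point of working in $H^1_c(\omega)$, so no genuinely different argument is involved.
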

\noindent
As the eigen-branch emanating from the first eigenvalue plays the key role,
we concentrate only for $m=1$ to have the following regularity properties.
\begin{proposition}[Regularity of the ground state \cite{TV1}]
Let $\lambda_{\omega,1}(\eta),\varphi_{\omega,1}(\cdot;\eta)$ be the first eigenvalue and the first eigenvector of the spectral problem defined in \eqref{NE1}. 
Then, there exists a neighborhood $\omega^{\prime}$ around zero such that
\begin{equation*}
\eta \longmapsto \lb  \lambda_{\omega,1}(\eta), \varphi_{\omega,1}(\cdot;\eta)\rb \in \mathbb{C}\times H^1_c(\omega)\mbox{ is analytic on }\omega^{\prime}.
\end{equation*}
At $\eta=0$, $\lambda_{\omega,1}(0)$ is simple. There is a choice of the first eigenvector $\varphi_{\omega,1}(y;\eta)$ satisfying
\begin{equation*}
\varphi_{\omega,1}(y;\eta) =\ \frac{1}{|\omega|^{1/2}} \ \forall y\in\partial\omega\mbox{ and }\forall\eta\in \omega^{\prime} .
\end{equation*}
Moreover, we have the following relations:\begin{equation}\begin{aligned}\label{lZ}
& \lambda_{\omega,1}(0) = 0,\quad D_k \lambda_{\omega,1}(0)=\frac{\partial\lambda_{\omega,1}}{\partial \eta_k}(0)= 0 \quad\forall k = 1,\ldots,N,\\
& \varphi_{\omega,1}(\cdot;0)= |\omega|^{-1/2},\quad D_k\varphi_{\omega,1}(y,0) = i|\omega|^{-1/2}(w_{e_k}(y)-y_k),\\ 
&\frac{1}{2}D^2_{kl}\lambda_{\omega,1}(0) = \frac{1}{2}\frac{\partial^2\lambda_{\omega,1}}{\partial \eta_k\partial\eta_l}(0) = m_{kl} \quad\forall k,l =1,\ldots,N,
\end{aligned}\end{equation}
where the last expression is considered as a Bloch spectral representation of the homogenized tensor.
\hfill\qed\end{proposition}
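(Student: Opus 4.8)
The plan is to carry over the proof of the periodic statement (Proposition~\ref{based}) to the present setting, the only genuinely new ingredient being the non-standard state space $H^1_c(\omega)$. Concretely, I would realise the translated operators $\{\mathcal{A}_\omega(\eta)\}$ as a self-adjoint holomorphic family of type (B) on $L^2_c(\omega)$ and then invoke Kato's analytic perturbation theory at the simple eigenvalue $\lambda_{\omega,1}(0)=0$.

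First I would pass to the weak form of \eqref{NE1}: testing against $\psi\in H^1_c(\omega)$, integrating by parts, and using the flux condition in \eqref{NE1} to annihilate the boundary term (here it is crucial that $\psi$ is constant on $\partial\omega$), the eigenproblem reads
\begin{equation*}
\mathfrak{a}_\eta(\varphi,\psi):=\int_\omega a^\omega_{kl}(y)\lb\frac{\partial}{\partial y_l}+i\eta_l\rb\varphi\ \overline{\lb\frac{\partial}{\partial y_k}+i\eta_k\rb\psi}\,dy=\lambda\,(\varphi,\psi)_{L^2(\omega)},\qquad\forall\,\psi\in H^1_c(\omega).
\end{equation*}
The form domain $H^1_c(\omega)$ is independent of $\eta$; for fixed $\varphi,\psi$ the map $\eta\mapsto\mathfrak{a}_\eta(\varphi,\psi)$ is a polynomial of degree two, hence extends to an entire function of $\eta$; $\mathfrak{a}_\eta(\varphi,\varphi)=\|A_\omega^{1/2}(\nabla+i\eta)\varphi\|^2_{L^2(\omega)}\ge0$ for real $\eta$; and $H^1_c(\omega)\subset H^1(\omega)\hookrightarrow\hookrightarrow L^2(\omega)$ because $\omega$ is Lipschitz. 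The boundedness of $A_\omega$ then yields uniform sectoriality of $\mathfrak{a}_\eta$ for $\eta$ near $0$, so $\{\mathcal{A}_\omega(\eta)\}$ is a holomorphic family of type (B) with compact resolvent, along which, for each $\eta$, Proposition~\ref{dl1} produces a discrete spectrum. At $\eta=0$: if $\mathcal{A}_\omega(0)\varphi=0$ with $\varphi$ constant on $\partial\omega$, then $\mathfrak{a}_0(\varphi,\varphi)=0$ (again the boundary term vanishes by the flux condition), so $\nabla\varphi\equiv0$; hence $\lambda_{\omega,1}(0)=0$ is simple, its eigenspace consisting of the constants, and we fix $\varphi_{\omega,1}(\cdot;0)=|\omega|^{-1/2}$.

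By analytic perturbation theory there is a neighbourhood $\omega'$ of $0$ on which $\lambda_{\omega,1}(\eta)$ stays simple and isolated, on which $\eta\mapsto\lambda_{\omega,1}(\eta)$ and the associated rank-one spectral projection are analytic, and on which one can pick an analytic eigenvector branch $\eta\mapsto\varphi_{\omega,1}(\cdot;\eta)\in H^1_c(\omega)$. Since each element of $H^1_c(\omega)$ is constant on $\partial\omega$, the boundary value $c(\eta):=\varphi_{\omega,1}(\cdot;\eta)|_{\partial\omega}$ is analytic with $c(0)=|\omega|^{-1/2}\neq0$; replacing $\varphi_{\omega,1}(\cdot;\eta)$ by $|\omega|^{-1/2}c(\eta)^{-1}\varphi_{\omega,1}(\cdot;\eta)$ preserves analyticity and gives $\varphi_{\omega,1}(y;\eta)=|\omega|^{-1/2}$ on $\partial\omega$ for all $\eta\in\omega'$, as asserted.

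Finally, I would extract the derivatives at $\eta=0$ by differentiating the weak identity, working throughout with $\mathfrak{a}_\eta$ (this keeps the computation free of the distributional terms and boundary integrals that a strong-form argument would generate). Differentiating once in $\eta_n$ and testing with the constant $\varphi_{\omega,1}(\cdot;0)$, every term either contains $\mathfrak{a}_0(\,\cdot\,,\text{const})=0$ or is $O(|\eta|^2)$, which forces $D_n\lambda_{\omega,1}(0)=0$ (more conceptually, $\lambda_{\omega,1}(-\eta)=\overline{\lambda_{\omega,1}(\eta)}=\lambda_{\omega,1}(\eta)$ by complex conjugation, so every odd-order derivative at $0$ vanishes). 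Testing the same once-differentiated identity against an arbitrary $\psi\in H^1_c(\omega)$ and writing $\zeta_n:=-i|\omega|^{1/2}D_n\varphi_{\omega,1}(\cdot;0)$ shows $\int_\omega a^\omega_{kl}(\partial_l\zeta_n+\delta_{ln})\overline{\partial_k\psi}\,dy=0$ for all $\psi\in H^1_c(\omega)$, while $\zeta_n=0$ on $\partial\omega$ by the boundary normalisation; comparison with \eqref{hsw} for $\lambda=e_n$ identifies $\zeta_n=w_{e_n}-y_n$, i.e. $D_k\varphi_{\omega,1}(y,0)=i|\omega|^{-1/2}(w_{e_k}(y)-y_k)$. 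Differentiating twice in $\eta_k,\eta_n$, testing with $\varphi_{\omega,1}(\cdot;0)$, inserting $D\lambda_{\omega,1}(0)=0$ together with the first-derivative formula for the eigenvector just obtained, and using the integral representation \eqref{dl12} of $m_{kn}$, one collects the remaining explicit terms into $D^2_{kn}\lambda_{\omega,1}(0)=2m_{kn}$, i.e. $\tfrac12 D^2_{kn}\lambda_{\omega,1}(0)=m_{kn}$. The step I expect to be the real obstacle is the first one: checking rigorously that the flux condition in \eqref{NE1} is precisely the natural boundary condition of $\mathfrak{a}_\eta$ on $H^1_c(\omega)$, hence that $\{\mathcal{A}_\omega(\eta)\}$ is indeed a self-adjoint holomorphic family of type (B) admitting the constants as ground states at $\eta=0$ --- after that, everything reduces to standard Kato theory plus bookkeeping.
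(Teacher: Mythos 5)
Your proposal is correct and follows essentially the same route as the paper (and its reference \cite{TV1}): a Kato--Rellich analytic perturbation argument for the shifted forms on the $\eta$-independent domain $H^1_c(\omega)$, with the simple isolated eigenvalue $\lambda_{\omega,1}(0)=0$ spanned by constants, followed by differentiating the weak formulation and identifying $D_k\varphi_{\omega,1}(\cdot;0)$ through \eqref{hsw} and $\tfrac12 D^2_{kl}\lambda_{\omega,1}(0)$ through \eqref{dl12}. This is exactly the scheme the paper itself replicates in Steps 1--5 of Section 3 for the $\Omega$-level operator, so no substantive difference or gap remains.
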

\noindent
Moreover, all odd order derivatives of $\lambda_{\omega,1} $ at $\eta=0$ are zero, i.e. 
\begin{equation}D^{q}\lambda_{\omega,1}(0)=0 \quad\forall q\in \mathbb{Z}^N_{+},\ |q| \mbox{ odd}. \end{equation}
In particular, the third order derivative is zero. However,  we are interested in the further next order approximation 
by calculating the fourth order derivatives of $\lambda_{\omega,1}(0)$, i.e.
$D^4_{klmn}\lambda_{\omega,1}(0)$, which is in general a non-positive definite tensor and can be defined as follows:
the second order derivative of the eigenvector $D^2_{kl}\varphi_{\omega,1}(\cdot;0)\in H^1_0(\omega)$ solves
\begin{equation}\begin{aligned}\label{dl17}
&\mathcal{A}D^2_{kl}\varphi_{\omega,1}(y;0)= -( a^\omega_{kl}(y) - m_{kl} )\varphi_{\omega,1}(y;0) - iC_k(D_l(\varphi_{\omega,1}(y;0)) - iC_l(D_k\varphi_{\omega,1}(y;0))\mbox{ in }\omega,\\
&D^2_{kl}\varphi_{\omega,1}(y;0) = 0 \mbox{ on }\partial\omega \mbox{ and} \int_{\partial\omega} A_\omega(y)\nabla_y D^2_{kl}\varphi_{\omega,1}(y;0)\cdot\nu \ d\sigma= 0 .
\end{aligned}\end{equation}
We call $D^2_{kl}\varphi_{\omega,1}(y;0) = |\omega|^{-1/2}w_{kl}(y)$ and let us define
\begin{center}$X^{(1)}_{\omega} =\ \eta_k(w_{e_k}(y)-y_k)\mbox{ and }\ X^{(2)}_{\omega}=\ \eta_k\eta_lw_{kl}\ $ likewise in \eqref{dl19}.\end{center}
Then, by summation, following \cite[Proposition 3.2]{COVB} it can be shown 
that the following expression defines the fourth order derivative of $\lambda_{\omega,1}(\eta)$ at $\eta=0$:
\begin{equation}\begin{aligned}\label{dl20}
\frac{1}{4!}D^4_{klmn}\lambda_{\omega,1}(0)\eta_k\eta_l\eta_m\eta_n &= -\frac{1}{|\omega|}\int_{\omega} \mathcal{A}\Big( X^{(2)}_{\omega} - \frac{1}{2}(X^{(1)}_{\omega})^2\Big)\cdot \Big( X^{(2)}_{\omega} - \frac{1}{2}(X^{(1)}_{\omega})^2\Big)  dy\\
&\leq 0\ .
\end{aligned}\end{equation}
This tells us that $\lambda_{\omega,1}^{(4)}(\eta)$ at $\eta=0$ is a non-positive definite tensor.
\hfill\qed
\\
\\
Next, we consider a medium in $\Omega$ with Hashin-Shtrikman micro-structures.
Let us introduce the operator $\mathcal{A}^{n}_\omega$ governed with the Hashin-Shtrikman construction: 
\begin{equation}\label{dl5}\mathcal{A}^{n}_\omega = -\frac{\partial}{\partial x_k}\Big(a^{n}_{kl}(x)\frac{\partial}{\partial x_l}\Big)
\ \mbox{ with  }\ a_{kl}^{n}(x) =\ a^\omega_{kl}\Big(\frac{x-y^{p,n}}{\varepsilon_{p,n}}\Big)\ \mbox{ in } \varepsilon_{p,n}\omega + y^{p,n} \mbox{ a.e. on } \Omega, \end{equation}
where $ meas\big(\Omega \smallsetminus \cup_{p\in K} (\varepsilon_{p,n}\omega + y^{p,n})\big) = 0,$ with 
$\kappa_n = \underset{p\in K}{sup}\hspace{2pt} \varepsilon_{p,n}\rightarrow 0$ 
for a finite or countable $K$ and, for each $n$, the sets 
$\varepsilon_{p,n}\omega + y^{p,n},\ p\in K$ are disjoint.\\
\\
We obtain the spectral resolution of $\mathcal{A}^{n}_\omega$ for fixed $n$, in each $\{\varepsilon_{p,n}\omega +y^{p,n}\}_{p\in K}$ domain, in an analogous manner.  
We introduce the following shifted operator 
\begin{equation}\label{dl2} 
(\mathcal{A}^{n,p}_\omega)(\xi)=\ -\Big(\frac{\partial}{\partial x_k} + i\xi_k\Big)\Big( a^\omega_{kl}\Big(\frac{x-y^{p,n}}{\varepsilon_{p,n}}\Big)\Big(\frac{\partial}{\partial x_l} +i\xi_l\Big) \Big),\quad x\in \varepsilon_{p,n}\omega +y^{p,n}.
\end{equation}
By homothecy, for a fixed $n$ and for each $p$, we define the first Bloch eigenvalue $\lambda_{\omega,1}^{n,p}(\xi)$
and the corresponding Bloch mode $\varphi_{\omega,1}^{n,p}(\cdot;\xi)$ for the operator $(\mathcal{A}^{n,p}_\omega)(\xi)$ for $\xi \in \kappa_n^{-1}\omega^{\prime}$ as follows:
\begin{equation}\label{dl3}
\lambda_{\omega,1}^{n,p}(\xi) := \varepsilon_{p,n}^{-2}\lambda_{\omega,1}(\varepsilon_{p,n} \xi),\quad 
\varphi^{n,p}_{\omega,1}(x;\xi):= \varphi_{\omega,1}\Big(\frac{x-y^{p,n}}{\varepsilon_{p,n}}; \varepsilon_{p,n}\xi\Big), \quad x\in\varepsilon_{p,n}\omega + y^{p,n},
\end{equation}
where $\lambda_{\omega,1}(\eta)$ and $\varphi_{\omega,1}(y;\eta)$ are the eigenelements defined in Proposition \ref{dl1}. \\
\\
This leads to define the Bloch transformation in $L^2(\mathbb{R}^N)$ in the following manner:
\begin{proposition}[Bloch transformation \cite{TV1}]
\noindent
\begin{enumerate}
\item For $g \in L^2(\mathbb{R}^N)$, for each $n$, the following limit in $L^2(\kappa_n^{-1}\omega^{\prime})$ space exists:
\begin{equation}\label{btype}
B^{n}_1 g(\xi): = B^{(\varepsilon_{p,n},\ y^{p,n})}_1 g(\xi) := \sum_p\int_{\varepsilon_{p,n}\omega + y^{p,n}} g(x)e^{-ix\cdot\xi}\overline{\varphi_{\omega,1}}\Big(\frac{x-y^{p,n}}{\varepsilon_{p,n}};\varepsilon_{p,n}\xi\Big)dx,
\end{equation}
where, for each $n$,  $ meas\big( \mathbb{R}^N \smallsetminus \underset{p\in K}{\cup} (\varepsilon_{p,n}\omega + y^{p,n})\big) = 0,$ 
with $\kappa_n = \underset{p\in K}{sup}\hspace{2pt} \varepsilon_{p,n}\rightarrow 0$ 
for a finite or countable $K$ and the sets $\varepsilon_{p,n}\omega + y^{p,n},\ p\in K$ are disjoint.\\
\\
The above definition \eqref{btype} is the corresponding first Bloch transformation governed with
Hashin-Shtrikman micro-structures.
\item We have the following Bessel inequality for elements of $L^2(\mathbb{R}^N)$:
\begin{equation}\label{basel}
\int_{\kappa_n^{-1}\omega^{\prime}} |B^{n}_1 g(\xi)|^2 d\xi \leq \mathcal{O}(1) ||g||^2_{L^2(\mathbb{R}^N)}.
\end{equation}
\item For $g\in H^1(\mathbb{R}^N)$, we have
\begin{equation}\label{rln}
B^{n}_1 \lb\mathcal{A}^{n}_{\omega}g(\xi)\rb := \sum_p\int_{\varepsilon_{p,n}\omega + y^{p,n}}\lambda_{\omega,1}^{n,p} g(x)e^{-ix\cdot\xi}\overline{\varphi_{\omega,1}}\Big(\frac{x-y^{p,n}}{\varepsilon_{p,n}};\varepsilon_{p,n}\xi\Big)dx.
\end{equation}
\end{enumerate}
\end{proposition}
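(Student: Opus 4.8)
The plan is to settle (1) and (2) together and then read off (3) from the cellwise eigenvalue equation. Fix $n$ and abbreviate $C_{p,n}=\varepsilon_{p,n}\omega+y^{p,n}$. Since $\{C_{p,n}\}_{p\in K}$ is a disjoint family covering $\mathbb{R}^N$ up to a null set, every $g\in L^2(\mathbb{R}^N)$ splits orthogonally as $g=\sum_p g_p$ with $g_p:=g\chi_{C_{p,n}}$ and $\|g\|_{L^2(\mathbb{R}^N)}^2=\sum_p\|g_p\|_{L^2}^2$, and $B^n_1 g=\sum_p B^n_1 g_p$ formally, where $B^n_1 g_p$ is just the $p$-th summand in \eqref{btype}. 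On one cell, the substitution $x=\varepsilon_{p,n}y+y^{p,n}$ together with the identity $e^{-i(x-y^{p,n})\cdot\xi}\,\overline{\varphi_{\omega,1}(y;\varepsilon_{p,n}\xi)}=\overline{\psi_{\omega,1}(y;\varepsilon_{p,n}\xi)}$, where $\psi_{\omega,1}(y;\eta):=e^{iy\cdot\eta}\varphi_{\omega,1}(y;\eta)$, rewrites this summand as $\varepsilon_{p,n}^{N}e^{-iy^{p,n}\cdot\xi}\,F_p(\varepsilon_{p,n}\xi)$ with $F_p(\eta):=\big\langle g(\varepsilon_{p,n}\cdot+y^{p,n}),\psi_{\omega,1}(\cdot;\eta)\big\rangle_{L^2(\omega)}$; note that, taking $\omega'$ star-shaped about $0$ and using $\varepsilon_{p,n}\le\kappa_n$, the argument $\varepsilon_{p,n}\xi$ stays in $\omega'$ when $\xi$ runs over $\kappa_n^{-1}\omega'$, so this is legitimate. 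Since $\{\varphi_{\omega,m}(\cdot;\eta)\}_m$ is an orthonormal basis of $L^2_c(\omega)$ (Proposition \ref{dl1}(iii)), $\|\psi_{\omega,1}(\cdot;\eta)\|_{L^2(\omega)}=1$, so by Cauchy--Schwarz $|F_p(\eta)|\le \|g(\varepsilon_{p,n}\cdot+y^{p,n})\|_{L^2(\omega)}=\varepsilon_{p,n}^{-N/2}\|g_p\|_{L^2}$, which yields the single-cell bound
\[
\int_{\kappa_n^{-1}\omega'}|B^n_1 g_p(\xi)|^2\,d\xi \;\le\; \varepsilon_{p,n}^{N}\,\|g_p\|_{L^2}^2\cdot|\kappa_n^{-1}\omega'| \;=\; \Big(\tfrac{\varepsilon_{p,n}}{\kappa_n}\Big)^{N}|\omega'|\,\|g_p\|_{L^2(\mathbb{R}^N)}^2 \;\le\; |\omega'|\,\|g_p\|_{L^2(\mathbb{R}^N)}^2 .
\]

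\textbf{The main obstacle} is to pass from this to a bound for the full sum $\sum_p B^n_1 g_p$ without the crude Cauchy--Schwarz loss of a factor $|K|$: the crossed contributions $\langle B^n_1 g_p,B^n_1 g_{p'}\rangle_{L^2(\kappa_n^{-1}\omega')}$ with $p\neq p'$ do not vanish, and one must exhibit genuine cancellation. These crossed terms equal
\[
\varepsilon_{p,n}^{N}\varepsilon_{p',n}^{N}\int_{\kappa_n^{-1}\omega'} e^{-i(y^{p,n}-y^{p',n})\cdot\xi}\,F_p(\varepsilon_{p,n}\xi)\,\overline{F_{p'}(\varepsilon_{p',n}\xi)}\,d\xi ,
\]
and carry a true oscillation $e^{-i(y^{p,n}-y^{p',n})\cdot\xi}$ (the cells are disjoint, so $y^{p,n}\neq y^{p',n}$, with separation comparable to $\max(\varepsilon_{p,n},\varepsilon_{p',n})$) against the slowly varying envelopes $F_q$. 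The plan is to turn this into a Schur-test / Cotlar--Stein estimate: using the analyticity, hence Lipschitz dependence, of $\eta\mapsto\varphi_{\omega,1}(\cdot;\eta)\in H^1_c(\omega)$ on $\omega'$ (the regularity result for the ground state of \cite{TV1}), the disjointness of the cells, and the geometric structure of a Vitali/Apollonian covering (boundedly many cells of each dyadic size, intercell distance proportional to size), one shows the off-diagonal sum is dominated by $\mathcal{O}(1)\sum_p\|g_p\|_{L^2}^2=\mathcal{O}(1)\|g\|_{L^2(\mathbb{R}^N)}^2$, uniformly in $n$; an equivalent route is to compare the positive quadratic form $g\mapsto\|B^n_1 g\|^2_{L^2(\kappa_n^{-1}\omega')}$ with the ``complete'' form obtained by keeping all modes $m$ and enlarging each cell's dual domain from $\kappa_n^{-1}\omega'$ to $\varepsilon_{p,n}^{-1}\omega'$ — the latter equals $|\omega'|\,\|g\|^2$ by Proposition \ref{dl1}(iii) and a cellwise Plancherel identity — but making this comparison effective again hinges on controlling the same intercell interaction. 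Granting this almost-orthogonality step, \eqref{basel} follows; and then (1) follows at once by applying \eqref{basel} to the tails $\sum_{p\notin \mathcal{F}}g_p$ over finite $\mathcal{F}\uparrow K$, which shows the partial sums in \eqref{btype} form a Cauchy sequence in $L^2(\kappa_n^{-1}\omega')$, hence the limit exists.

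\textbf{Finally,} (3) is essentially a definition, the right-hand side of \eqref{rln} being the natural image of $\mathcal{A}^n_\omega g$ under $B^n_1$; it is obtained cellwise. For (smooth enough, or generic) $g\in H^1(\mathbb{R}^N)$, pair $\mathcal{A}^n_\omega g$ with $x\mapsto e^{ix\cdot\xi}\varphi^{n,p}_{\omega,1}(x;\xi)$ on $C_{p,n}$. In these coordinates $\mathcal{A}^n_\omega$ restricted to $C_{p,n}$ is the homothety of $\mathcal{A}_\omega$, and, being conjugated by $e^{ix\cdot\xi}$, it acts on $\varphi^{n,p}_{\omega,1}(\cdot;\xi)$ as the shifted operator $(\mathcal{A}^{n,p}_\omega)(\xi)$ of \eqref{dl2}; integrating by parts, the boundary terms on $\partial C_{p,n}$ reorganize into the two interface conditions of \eqref{NE1} in rescaled form (that $\varphi^{n,p}_{\omega,1}$ is constant on $\partial C_{p,n}$ and that $\int_{\partial C_{p,n}}a^\omega_{kl}\big(\tfrac{\partial}{\partial x_l}+i\xi_l\big)\varphi^{n,p}_{\omega,1}\,\nu_k\,d\sigma=0$), which cancel the net-flux contributions upon summation over the shared interfaces, while $(\mathcal{A}^{n,p}_\omega)(\xi)\varphi^{n,p}_{\omega,1}(\cdot;\xi)=\lambda^{n,p}_{\omega,1}(\xi)\varphi^{n,p}_{\omega,1}(\cdot;\xi)$ (from \eqref{dl3}) produces the scalar factor $\lambda^{n,p}_{\omega,1}(\xi)$ multiplying the $p$-th Bloch coefficient $B^n_1 g_p(\xi)$. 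Summing over $p$ gives \eqref{rln}, consistent with the $L^2$-theory established in (1)--(2).
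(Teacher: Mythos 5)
The paper itself offers no proof of this proposition (it is quoted from \cite{TV1}), so your argument must stand on its own, and as written it has a genuine gap at exactly the decisive point. Your single-cell computation and the bound $\int_{\kappa_n^{-1}\omega'}|B^n_1 g_p(\xi)|^2 d\xi\le(\varepsilon_{p,n}/\kappa_n)^N|\omega'|\,\|g_p\|^2$ are fine, but the entire content of items (1)--(2) is the passage from these cellwise bounds to a bound for the sum over the countable family with a constant independent of the covering, and this is precisely what you postpone (``granting this almost-orthogonality step''). The sketch you offer for that step is not available under the hypotheses: the proposition assumes only an arbitrary Vitali covering by reduced copies of an arbitrary bounded Lipschitz $\omega$, so there is no Apollonian or dyadic structure to invoke, and the oscillation you want to exploit can simply be absent --- the $y^{p,n}$ are translation vectors, not centers, and disjointness of $\varepsilon_{p,n}\omega+y^{p,n}$ and $\varepsilon_{p',n}\omega+y^{p',n}$ does not force $|y^{p,n}-y^{p',n}|\gtrsim\max(\varepsilon_{p,n},\varepsilon_{p',n})$ (if $0\notin\omega$ the translation vectors of two disjoint copies can even coincide). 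No Schur or Cotlar--Stein decay estimate is actually produced, and without it neither \eqref{basel} nor the $L^2(\kappa_n^{-1}\omega')$-convergence asserted in \eqref{btype} is established. Note also a mild circularity in your deduction of (1) from (2): \eqref{basel} concerns the transform whose existence (1) is supposed to provide; what is really needed is the same quadratic bound holding uniformly over finite partial sums, i.e.\ exactly the missing estimate.

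Concerning item (3), observe that \eqref{rln} is written with ``$:=$'': for $g\in H^1(\mathbb{R}^N)$ the function $\mathcal{A}^n_\omega g$ is in general only in $H^{-1}$, and the statement (following \cite{TV1}) takes the right-hand side as the \emph{definition} of $B^n_1(\mathcal{A}^n_\omega g)$. Your attempt to derive it as an identity by cellwise integration by parts overstates what the boundary conditions deliver: after two integrations by parts over $\varepsilon_{p,n}\omega+y^{p,n}$, the surviving boundary contributions are of the form $\int_{\partial(\varepsilon_{p,n}\omega+y^{p,n})} g\,e^{-ix\cdot\xi}\,\overline{a^{n}_{kl}\big(\tfrac{\partial}{\partial x_l}+i\xi_l\big)\varphi^{n,p}_{\omega,1}}\,\nu_k\,d\sigma$ together with the conormal flux of $g$ paired with the constant trace of $\varphi^{n,p}_{\omega,1}$. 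The conditions in \eqref{NE1} only annihilate the \emph{total} flux of the eigenfunction integrated against a constant; $g\,e^{-ix\cdot\xi}$ is not constant on the interface, and on a shared interface the conormal data of $\varphi^{n,p}_{\omega,1}$ and $\varphi^{n,p'}_{\omega,1}$ come from different rescalings, so they do not cancel pointwise. Hence the claimed cancellation ``upon summation over the shared interfaces'' does not follow; the honest reading is that \eqref{rln} is adopted as a definition motivated by the cellwise eigenvalue relation \eqref{dl3}, not proved as an identity for every $g\in H^1(\mathbb{R}^N)$.
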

\noindent
One has the first Bloch transform is an approximation to the Fourier transform.
\begin{proposition}[First Bloch transform tends to Fourier transform \cite{TV1}]
\noindent
\begin{enumerate}
 \item  If $g_{n} \rightharpoonup g$ in $L^2(\mathbb{R}^N)$ weak, then
$\chi_{\kappa_n^{-1}\omega^{\prime}}(\xi)B_1^{n}g^{n}(\xi) \rightharpoonup \widehat{g}(\xi)$
in $L^2(\mathbb{R}^N)$ weak, provided there is a fixed compact $R$ 
such that support of $ g^n\subseteq R\quad\forall n.$ 
\item  If $g_{n} \rightarrow g$ in $L^2(\mathbb{R}^N)$ strong, then for the subsequence $\varepsilon_{p,n}$,
$\chi_{\kappa_n^{-1}\omega^{\prime}}(\xi)B_1^{n}g^n \rightarrow \widehat{g}(\xi)$
in $L^2_{loc}(\mathbb{R}^N).$
\end{enumerate}
\end{proposition}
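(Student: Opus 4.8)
\medskip
\noindent\textbf{Proof proposal.}
The plan is to exploit that $\varepsilon_{p,n}\le\kappa_n\to0$, so that the rescaled Bloch parameter $\varepsilon_{p,n}\xi$ tends to $0$ uniformly as $\xi$ ranges over a bounded set; by the regularity of the ground state $\varphi_{\omega,1}$ recalled above this forces the mode $\varphi_{\omega,1}(\cdot;\varepsilon_{p,n}\xi)$ occurring in \eqref{btype} to be, to leading order, the constant $\varphi_{\omega,1}(\cdot;0)=|\omega|^{-1/2}$, and replacing it by this constant turns $B_1^n$ into a fixed multiple of the Fourier transform, because the Vitali cells $E_p:=\varepsilon_{p,n}\omega+y^{p,n}$ are pairwise disjoint and cover $\mathbb{R}^N$ up to a null set (see \eqref{ll1}). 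Concretely, I would first write $\overline{\varphi_{\omega,1}}\big(\tfrac{x-y^{p,n}}{\varepsilon_{p,n}};\varepsilon_{p,n}\xi\big)=|\omega|^{-1/2}+r_{p,n}(x;\xi)$ and record that the analyticity of $\eta\mapsto\varphi_{\omega,1}(\cdot;\eta)\in H^1_c(\omega)$ near $\eta=0$ gives $\|\varphi_{\omega,1}(\cdot;\eta)-|\omega|^{-1/2}\|_{L^2(\omega)}\le C|\eta|$ for $|\eta|$ small; rescaling to $E_p$ this becomes $\|r_{p,n}(\cdot;\xi)\|_{L^2(E_p)}\le C\varepsilon_{p,n}^{N/2}|\varepsilon_{p,n}\xi|=C\varepsilon_{p,n}^{1+N/2}|\xi|$, valid once $n$ is large enough that $\varepsilon_{p,n}\xi\in\omega'$. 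The constant part of \eqref{btype} then contributes exactly $|\omega|^{-1/2}\widehat{g^n}(\xi)$ and the $r_{p,n}$ part is a remainder $E_n(\xi)$ to be estimated.

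\medskip
\noindent\emph{Part (1).} For the weak statement I would fix the compact $R\supseteq\mathrm{supp}\,g^n$ and bound, by Cauchy--Schwarz cellwise and then in $p$ over the cells meeting $R$,
\begin{align*}
|E_n(\xi)|&\le\sum_{p:\,E_p\cap R\neq\emptyset}\|g^n\|_{L^2(E_p)}\,\|r_{p,n}(\cdot;\xi)\|_{L^2(E_p)}\\
&\le C|\xi|\,\kappa_n\Big(\sum_{p:\,E_p\cap R\neq\emptyset}\varepsilon_{p,n}^N\Big)^{1/2}\|g^n\|_{L^2(\mathbb{R}^N)}\le C|\xi|\,\kappa_n\,\|g^n\|_{L^2(\mathbb{R}^N)},
\end{align*}
using that $\sum_{p:\,E_p\cap R\neq\emptyset}\varepsilon_{p,n}^N=|\omega|^{-1}\sum_{p:\,E_p\cap R\neq\emptyset}|E_p|$ is bounded by $|\omega|^{-1}$ times the measure of a fixed neighbourhood of $R$ (here \eqref{ll1} enters). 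Hence, testing against any $\psi\in C_c^\infty(\mathbb{R}^N)$, the remainder contributes $O(\kappa_n\|g^n\|_{L^2})\to0$. For the main term I would use that the compact support makes $\xi\mapsto\widehat{g^n}(\xi)=\int_R g^n(x)e^{-ix\cdot\xi}\,dx$ a uniformly bounded family of continuous functions with $\widehat{g^n}(\xi)\to\widehat g(\xi)$ for every $\xi$ — this is exactly the point where the hypothesis $\mathrm{supp}\,g^n\subseteq R$ is used, pairing $g^n\rightharpoonup g$ against $e^{-ix\cdot\xi}\chi_R\in L^2(\mathbb{R}^N)$ — whence dominated convergence together with $\chi_{\kappa_n^{-1}\omega'}(\xi)\to1$ for each $\xi$ yields $\int\chi_{\kappa_n^{-1}\omega'}\widehat{g^n}\,\overline\psi\to\int\widehat g\,\overline\psi$. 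Finally, since the Bessel inequality \eqref{basel} keeps $\chi_{\kappa_n^{-1}\omega'}B_1^ng^n$ bounded in $L^2(\mathbb{R}^N)$, convergence on the dense set $C_c^\infty(\mathbb{R}^N)$ upgrades to the asserted weak $L^2$ convergence.

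\medskip
\noindent\emph{Part (2).} For the strong statement I would first split $B_1^ng^n=B_1^ng+B_1^n(g^n-g)$ and use the uniform bound in \eqref{basel} to get $\|B_1^n(g^n-g)\|_{L^2}\le C\|g^n-g\|_{L^2}\to0$, reducing matters to showing $B_1^ng\to\widehat g$ in $L^2_{loc}(\mathbb{R}^N)$ for the fixed limit $g$. Then I would approximate $g$ by $g_\delta\in C_c^\infty(\mathbb{R}^N)$ with $\|g-g_\delta\|_{L^2}<\delta$, so that $\|B_1^n(g-g_\delta)\|_{L^2}\le C\delta$ and, by Plancherel, $\|\widehat g-\widehat{g_\delta}\|_{L^2}\le C\delta$; for the fixed compactly supported $g_\delta$ the computation of Part (1) gives $B_1^ng_\delta=|\omega|^{-1/2}\widehat{g_\delta}+E_n$ on any bounded $B$ (taking $n$ large so that $\chi_{\kappa_n^{-1}\omega'}\equiv1$ on $B$), with $\|E_n\|_{L^2(B)}\le C\kappa_n\|g_\delta\|_{L^2}\big(\int_B|\xi|^2\,d\xi\big)^{1/2}\to0$. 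Combining these, $\limsup_n\|B_1^ng-\widehat g\|_{L^2(B)}\le C\delta$ for every $\delta>0$, which gives the claim. (The ``subsequence'' in the statement is relevant only if one prefers to extract it among the scales $\{\varepsilon_{p,n}\}$; it plays no role above. The factor $|\omega|^{-1/2}$ is understood to be absorbed in the normalization of $\widehat g$, or one may assume $|\omega|=1$.)

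\medskip
\noindent\emph{Where the difficulty lies.} The delicate part is the bookkeeping over the (possibly infinite) Vitali family: since $\varphi_{\omega,1}(\cdot;\eta)$ is controlled only on the small neighbourhood $\omega'$ of $\eta=0$, one must use $\varepsilon_{p,n}\le\kappa_n\to0$ to place every $\varepsilon_{p,n}\xi$ into $\omega'$ simultaneously for bounded $\xi$, and then get a bound on $E_n(\xi)$ that is summable in $p$ uniformly in $n$ — which is precisely what $\sum_{p:\,E_p\cap R\neq\emptyset}\varepsilon_{p,n}^N\le C$ (a consequence of \eqref{ll1}) provides before the Cauchy--Schwarz step. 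Once that cellwise estimate is in hand, the remainder of the proof is the standard ``Bloch transform $\to$ Fourier transform'' passage powered by \eqref{basel}.
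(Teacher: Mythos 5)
This proposition is only quoted here from \cite{TV1}; the paper itself gives no proof, so there is no internal argument to compare against. Your proof is correct and is the standard one for this kind of statement: split the Bloch mode into the constant $|\omega|^{-1/2}$ plus an $O(\varepsilon_{p,n}|\xi|)$ remainder via analyticity of the ground state, control the remainder cellwise by Cauchy--Schwarz together with the volume bound on the Vitali cells, and conclude with the Bessel inequality \eqref{basel} plus a density argument (with the $C_c^\infty$ cut-off reduction in part (2)). Two small caveats: the covering entering \eqref{btype} is of all of $\mathbb{R}^N$, not of $\Omega$ as in \eqref{ll1}, which is exactly what lets the constant part reproduce $\int_{\mathbb{R}^N} g^n e^{-ix\cdot\xi}\,dx$; and the factor $|\omega|^{-1/2}$ should not be waved away as ``absorbed'' or handled by assuming $|\omega|=1$ --- it is fixed by the Fourier-transform normalization of \cite{TV1}, so state that convention explicitly and carry the constant through.
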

\noindent
Using these above tools the following homogenization theorem has been deduced in \cite{TV1}.
\begin{theorem}[Homogenization result \cite{TV1}]
Let us consider $\Omega$ be an open subset of $\mathbb{R}^N$
and consider the operator $\mathcal{A}^{n}_\omega$ introduced in \eqref{dl5} governed with the Hashin-Shtrikman construction, where
the matrix $A_\omega\in \mathcal{M}(\alpha,\beta;\omega)$ is equivalent to $M$ in the sense of \eqref{hsw}.
Let $f \in L^2(\Omega)$ and consider $u^{n} \in H^1_0(\Omega)$ being the unique solution of the boundary value problem 
\begin{equation*} \mathcal{A}^{n}_\omega u^{n} = f \quad\mbox{in }\Omega.\end{equation*}
Then, there exists $u\in H^1_{0}(\Omega)$ such that the sequence $u^{n}$ converges to $u$ in $H^1_{0}(\Omega)$ weak,
with the following convergence of the flux:
\begin{equation*}
\sigma^{n}_{\omega}= A^{n}_{\omega}\nabla u^{n} \rightharpoonup M\nabla u =\sigma_{\omega} \quad\mbox{in } L^2(\Omega)\mbox{ weak. }
\end{equation*}
In particular, the limit $u$ satisfies homogenized equation:
\begin{equation*}\mathcal{A}^{*}_{\omega}u = -\frac{\partial}{\partial x_l}\Big(m_{kl}\frac{\partial}{\partial x_k}u\Big) = f \quad\mbox{in }\Omega.\end{equation*}
\end{theorem}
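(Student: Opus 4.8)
The plan is to run the classical oscillating test function argument, using as correctors the functions $v^{n}$ constructed in \eqref{uB}; note also that, since the excerpt has already recorded the $H$-convergence $A^{n}_\omega\to M$, the statement can be obtained at once from the very definition of $H$-convergence in \cite[Page no. 82]{T} applied with right-hand side $f\in L^2(\Omega)\hookrightarrow H^{-1}(\Omega)$. I shall sketch the direct, self-contained route.

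First I would record the a priori estimate. Because $A_\omega\in\mathcal{M}(\alpha,\beta;\omega)$, the coefficients $a^{n}_{kl}$ of \eqref{dl5} obey the same ellipticity and boundedness bounds uniformly in $n$, so Lax--Milgram yields the unique $u^{n}\in H^1_0(\Omega)$ with $\mathcal{A}^{n}_\omega u^{n}=f$ and $\|u^{n}\|_{H^1_0(\Omega)}\le C\,\|f\|_{L^2(\Omega)}$, $C=C(\alpha,\Omega)$. Extracting a subsequence, $u^{n}\rightharpoonup u$ weakly in $H^1_0(\Omega)$ and, by Rellich, strongly in $L^2_{loc}(\Omega)$; the fluxes $\sigma^{n}_\omega=A^{n}_\omega\nabla u^{n}$ stay bounded in $L^2(\Omega)^N$, so $\sigma^{n}_\omega\rightharpoonup\sigma_\omega$ weakly in $L^2(\Omega)^N$ along a further subsequence, and passing to the limit in $-\mathrm{div}\,\sigma^{n}_\omega=f$ gives $-\mathrm{div}\,\sigma_\omega=f$ in $\Omega$.

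The core is to identify $\sigma_\omega=M\nabla u$. For each $j=1,\dots,N$ take $\lambda=e_j$ in \eqref{uB}--\eqref{uC} and write $v^{n}_j$ for the resulting corrector; by the Hashin--Shtrikman construction $v^{n}_j$ coincides with the affine function $x_j$ on each interface $\partial(\varepsilon_{p,n}\omega+y^{p,n})$, hence it is genuinely an element of $H^1_{loc}(\mathbb{R}^N)$, with $v^{n}_j\rightharpoonup x_j$ in $H^1$, $A^{n}_\omega\nabla v^{n}_j\rightharpoonup Me_j$ in $L^2$, and $-\mathrm{div}(A^{n}_\omega\nabla v^{n}_j)=0$. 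Fixing $\phi\in\mathcal{D}(\Omega)$, I would evaluate $\int_\Omega\sigma^{n}_\omega\cdot\nabla v^{n}_j\,\phi\,dx$ in two ways: once by testing $-\mathrm{div}\,\sigma^{n}_\omega=f$ against $v^{n}_j\phi$, once by testing $-\mathrm{div}(A^{n}_\omega\nabla v^{n}_j)=0$ against $u^{n}\phi$, the two outcomes being equal because $a^{n}_{kl}=a^{n}_{lk}$. In each identity the product terms pass to the limit as weak--strong pairings (one factor converging weakly in $L^2$, the other strongly in $L^2_{loc}$); using $-\mathrm{div}\,\sigma_\omega=f$ to simplify and integrating by parts, with $M$ constant and symmetric, then gives $\int_\Omega(\sigma_\omega)_j\phi\,dx=\int_\Omega(M\nabla u)_j\phi\,dx$ for every $\phi$ and every $j$, that is $\sigma_\omega=M\nabla u$ a.e. I expect this compensated-compactness (div--curl) passage to be the main obstacle: one must keep track of the localizing cutoff $\phi$ and, more importantly, verify that the Hashin--Shtrikman building blocks $v^{n}_j$ really do glue across the Vitali cells into admissible $H^1$ test functions with the claimed weak limits.

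To finish, combining $\sigma_\omega=M\nabla u$ with $-\mathrm{div}\,\sigma_\omega=f$ produces $-\partial_{x_l}(m_{kl}\partial_{x_k}u)=f$ in $\Omega$ together with $u\in H^1_0(\Omega)$; since this homogenized problem has a unique solution ($M$ being constant and positive definite, Lax--Milgram once more), the limit is independent of the extracted subsequence, so the whole sequence $u^{n}$ converges weakly in $H^1_0(\Omega)$ to $u$ and $\sigma^{n}_\omega\rightharpoonup M\nabla u$ weakly in $L^2(\Omega)$, which is the assertion.
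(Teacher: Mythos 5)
Your argument is correct, but it is not the route the paper takes. This theorem is quoted from \cite{TV1}, where it is \emph{deduced from the Bloch spectral machinery} set up in the preceding propositions: one applies the first Bloch transform $B^n_1$ adapted to the Hashin--Shtrikman covering to the equation $\mathcal{A}^n_\omega u^n=f$, uses the relation \eqref{rln} together with the Taylor expansion of the first Bloch eigenvalue (whose Hessian at $\eta=0$ is $2m_{kl}$ by \eqref{lZ}), invokes the Bessel inequality \eqref{basel} and the fact that the first Bloch transform converges to the Fourier transform, and then identifies the limit equation $m_{kl}\xi_k\xi_l\widehat{u}=\widehat{f}$ in Fourier variables. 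What you propose instead is the classical Murat--Tartar energy (oscillating test function / div--curl) argument with the correctors $v^n_j$ of \eqref{uB}, or, even more directly, the observation that the statement is exactly the definition of $H$-convergence applied to the already-recorded convergence $A^n_\omega\xrightarrow{H}M$ with datum $f\in L^2(\Omega)\subset H^{-1}(\Omega)$; the properties \eqref{uC} you need for the weak--strong pairings are stated in the paper (following Tartar), so your gluing worry about the $v^n_j$ is already settled and there is no gap. The trade-off: your route is shorter and more elementary for the first-order result, essentially re-proving Tartar's original Hashin--Shtrikman homogenization; the paper's Bloch-wave route is heavier but is precisely what makes the higher-order (fourth-order, dispersive) approximation accessible, which is the actual objective of the paper --- the same spectral objects $\lambda^{n}_{\Omega,1}$, $\varphi^{n}_{\Omega,1}$ that yield the homogenized tensor at second order produce the dispersion tensor $d_{HS}$ at fourth order, something the div--curl argument cannot give.
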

\hfill
\qed
\\
We end our discussion here concerning with the homogenized matrix. In the next section we will move into defining the dispersion tensor for the Hashin-Shtrikman micro-structures.
\section{Dispersion tensor and Hashin-Shtrikman structures}\label{conj}
\setcounter{equation}{0}
Here we are going to define the dispersion tensor or the \textit{Burnett coefficient,} more precisely, 
the fourth order approximation of the medium governed by the Hashin-Shtrikman micro-structures, while in the previous section we have studied the 
homogenized coefficient as a second order approximation of the medium. \\
\\
Let us consider $\Omega$ be an open subset of $\mathbb{R}^N$. We recall \eqref{dl5} where we have introduced the operator $\mathcal{A}^{n}_\omega$ governed with the Hashin-Shtrikman construction: 
$$\mathcal{A}^{n}_\omega = -\frac{\partial}{\partial x_k}\Big(a^{n}_{kl}(x)\frac{\partial}{\partial x_l}\Big),$$
with  $A^{n}_\omega(x)=\ [a_{kl}^{n}(x)] =\Big[a^\omega_{kl}\Big(\frac{x-y^{p,n}}{\varepsilon_{p,n}}\Big)\Big]\ \mbox{ in } \varepsilon_{p,n}\omega + y^{p,n}, \mbox{ a.e. on } \Omega$,
where $ meas\big(\Omega \smallsetminus \cup_{p\in K} (\varepsilon_{p,n}\omega + y^{p,n})\big) = 0,$ with 
$\kappa_n = \underset{p\in K}{sup}\hspace{2pt} \varepsilon_{p,n}\rightarrow 0$ 
for a finite or countable $K$ and, for each $n$, the sets 
$\varepsilon_{p,n}\omega + y^{p,n},\ p\in K$ are disjoint.\\
\\
Previously, for each $n$, we restricted the operator $\mathcal{A}^{n}_\omega$ in each $\{\varepsilon_{p,n}\omega +y^{p,n}\}_{p\in K}$ to define $\mathcal{A}^{n,p}_\omega$. Then,
by homothecy, we obtained its first Bloch spectral data $(\lambda_{\omega,1}^{n,p},\ \varphi_{\omega,1}^{n,p})$ in \eqref{dl3}. We have
the following Taylor expansion around zero:\begin{align*}
\lambda_{\omega,1}^{n,p}(\xi) &=\ \frac{1}{2!}\frac{\partial^2\lambda_{\omega,1}}{\partial\eta_k\partial\eta_l}(0)\xi_k\xi_l + \varepsilon_{p,n}^2\frac{1}{4!}\frac{\partial^4\lambda_{\omega,1}}{\partial\eta_k\partial\eta_l\partial\eta_m\partial\eta_n}(0)\xi_k\xi_l\xi_m\xi_n + o(\varepsilon_{p,n}^2)\\ 
                                &=\ m_{kl} \xi_k\xi_l + \varepsilon_{p,n}^2\frac{1}{4!}\frac{\partial^4\lambda_{\omega,1}}{\partial\eta_k\partial\eta_l\partial\eta_m\partial\eta_n}(0) \xi_k\xi_l\xi_m\xi_n + o(\varepsilon_{p,n}^2),\ \ \xi\in \kappa_n^{-1}\omega^{\prime}.
\end{align*}
The first term in the above expression is providing the homogenized medium as the second order approximation. 
The second term provides the next order that is the fourth order approximation of the medium 
by considering the last term to be sufficiently small enough. As we know, for each $n$ depending 
upon the parameter $p$, the scales $\varepsilon_{p,n}$ could vary in plenty of ways with remaining inside
the class of sequences of Vitali coverings of $\Omega$. The second order approximation or the homogenized 
tensor $m_{kl}\xi_k\xi_l$ is universal among all possible Vitali coverings, whereas the fourth order
approximation is not so. There is a more vibrant dependence on the scales $\varepsilon_{p,n}$, and it varies 
over the Vitali coverings. 
Taking into account this fact, in order to define the \textit{Burnett coefficient} or the dispersion tensor $d_{HS}$
in the class of generalized Hashin-Shtrikman structures,
we will introduce an approximating quantity $d^{n}_{HS}$ by taking
an average over the various scales $\varepsilon_{p,n}^2$ and then, quotient it out by the highest
scale factor $\kappa_n^2$ $(\kappa_n = \underset{p\in K}{sup}\ \varepsilon_{p,n})$. For that, we will consider the first Bloch eigenvalue 
associated with the shifted operator $\mathcal{A}^{n}_{\omega}(\xi)$ ($\xi\in\mathbb{R}^N$) in $\Omega$. 
Finally, by passing to the limit as $n\rightarrow \infty$, we will characterize the dispersion tensor $d_{HS}$ for the medium.\\
\\
We begin by introducing the following spectral problem in $\Omega$
associated with the shifted operator $\mathcal{A}^{n}(\xi)$ ($\xi\in\mathbb{R}^N$) likewise in \eqref{NE1}: for each fixed $n\in\mathbb{N}$,
\begin{equation}\begin{aligned}\label{dl4}
\mathcal{A}^{n}_\omega(\xi)\varphi^{n}_{\Omega}(x;\xi)=\ -\Big(\frac{\partial}{\partial x_k} + i\xi_k\Big)&\Big[a_{kl}^{n}(x)\Big(\frac{\partial}{\partial x_l} + i\xi_l\Big)\Big]\varphi^{n}_{\Omega}(x;\xi) = \lambda^{n}_{\Omega}(\xi)\varphi^{n}_{\Omega}(x;\xi) \mbox{ in }\Omega, \\
\varphi^{n}_{\Omega}(x;\xi)\mbox{ is constant on }\partial\Omega&\mbox{ and } \int_{\partial\Omega} a_{kl}^{n}(x)\Big(\frac{\partial}{\partial x_l} + i\xi_l\Big)\varphi^{n}_{\Omega}(x;\xi)\nu_k\ d\sigma = 0, 
\end{aligned}\end{equation}
where $\nu$ is the outer normal vector on the boundary and $d\sigma$ is the surface measure on $\partial\Omega$.
\paragraph{Weak formulation:}
Here first we introduce the function spaces
\begin{align*}
L^2_{c}(\Omega) =  \ \{\varphi\in L^2_{loc}(\mathbb{R}^N) \  | \ &\varphi \mbox{ is constant in }\mathbb{R}^N \smallsetminus \Omega \},\\
H^1_{c}(\Omega)=\ \{\varphi\in H^1_{loc}(\mathbb{R}^N) \  | \ &\varphi \mbox{ is constant in }\mathbb{R}^N \smallsetminus \Omega \}.
\end{align*}
Here “$c$” is a floating constant depending on the element under consideration. \\
\\
As a next step we give the weak formulation of the problem in these function spaces. 
We are interested in proving the existence of the eigenvalue and the corresponding eigenvector 
$(\lambda^{n}_{\Omega}(\eta), \varphi^{n}_{\Omega}(x;\xi))\in \mathbb{C}\times H^1_{c}(\Omega)$ of 
the following weak formulation of  \eqref{dl4}: for each fixed $n$,
\begin{multline}\label{dl8}
\int_\Omega a^{n}_{kl}(x)\Big(\frac{\partial \varphi^{n}_{\Omega}(x;\xi)}{\partial x_l} +i\xi_l \varphi^{n}_{\Omega}(x;\xi)  \Big) \overline{ \Big( \frac{\partial \psi}{\partial x_k} + i\xi_k \psi \Big) }dx
\\= \lambda^{n}_{\Omega}(\xi)\int_{\Omega}\varphi^{n}_{\Omega}(x;\xi)\overline{\psi}dx \quad \forall \psi\in H^1_{c}(\Omega).
\end{multline}
\paragraph{Existence Result:}
By following the same analysis presented in \cite{TV1}, we state the corresponding existence result for the problem \eqref{dl8}.
\begin{proposition}
Fix $\xi \in \mathbb{R}^N$. For each fixed $n$, there exist a sequence of eigenvalues $\{ \lambda^{n}_{\Omega,m}(\xi) \geq 0; m \in \mathbb{N} \}$
and its corresponding eigenvectors $\{ \varphi^{n}_{\Omega,m} (x; \xi)\in H^1_{c}(\Omega); m \in \mathbb{N} \}$ satisfying \eqref{dl8}.
\end{proposition}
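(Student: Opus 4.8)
The plan is to carry over, almost verbatim, the existence argument of \cite{TV1} behind Proposition~\ref{dl1}, now reading $\Omega$ for $\omega$ and $A^{n}_{\omega}$ for $A_{\omega}$. The first point to record is that, for each fixed $n$, the assembled matrix $A^{n}_{\omega}$ of \eqref{dl5} lies in $\mathcal{M}(\alpha,\beta;\Omega)$: translation by $y^{p,n}$ and dilation by $\varepsilon_{p,n}$ do not change the pointwise symmetry, the ellipticity constant $\alpha$ or the bound $\beta$ of $A_{\omega}$, and the copies $\varepsilon_{p,n}\omega+y^{p,n}$ tile $\Omega$ up to a null set, so $a^{n}_{kl}=a^{n}_{lk}$ and $\alpha|\zeta|^{2}\le A^{n}_{\omega}(x)\zeta\cdot\zeta$, $|A^{n}_{\omega}(x)\zeta|\le\beta|\zeta|$ for a.e.\ $x\in\Omega$. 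Consequently the abstract machinery developed for \eqref{NE1} applies with no modification.

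Next I would set up the sesquilinear form $b_{\xi}(\varphi,\psi)$ defined as the left-hand side of \eqref{dl8}, on the Hilbert space $H^1_{c}(\Omega)$; this space is a closed subspace of $H^1(\Omega)$, being the preimage of the (closed) subspace of a.e.-constant functions under the continuous trace map $H^1(\Omega)\to H^{1/2}(\partial\Omega)$. Using $a^{n}_{kl}=a^{n}_{lk}$ real one checks $\overline{b_{\xi}(\psi,\varphi)}=b_{\xi}(\varphi,\psi)$, so $b_{\xi}$ is Hermitian; it is bounded on $H^1_{c}(\Omega)$ in terms of $\beta$ and $|\xi|$; and since $\|(\nabla+i\xi)\varphi\|_{L^2}^{2}\ge\tfrac12\|\nabla\varphi\|_{L^2}^{2}-|\xi|^{2}\|\varphi\|_{L^2}^{2}$, ellipticity gives the G\aa rding inequality
\begin{equation*}
b_{\xi}(\varphi,\varphi)+\mu\,\|\varphi\|_{L^2(\Omega)}^{2}\ \ge\ \tfrac{\alpha}{2}\,\|\varphi\|_{H^1(\Omega)}^{2},\qquad \mu:=\alpha|\xi|^{2}+\tfrac{\alpha}{2},
\end{equation*}
for all $\varphi\in H^1_{c}(\Omega)$, while at the same time $b_{\xi}(\varphi,\varphi)\ge\alpha\|(\nabla+i\xi)\varphi\|_{L^2}^{2}\ge0$.

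From here the argument is the standard resolvent/spectral-theorem package. The shifted form $b_{\xi}(\cdot,\cdot)+\mu(\cdot,\cdot)_{L^2(\Omega)}$ is bounded, Hermitian and coercive on $H^1_{c}(\Omega)$, so Lax--Milgram makes it a topological isomorphism; composing its inverse with the embedding $H^1_{c}(\Omega)\hookrightarrow L^2_{c}(\Omega)$ --- which is compact because $\Omega$ is bounded with Lipschitz boundary and the embedding is just $H^1(\Omega)\hookrightarrow L^2(\Omega)$ restricted to a closed subspace --- produces a solution operator $T_{\xi}\colon L^2_{c}(\Omega)\to L^2_{c}(\Omega)$ that is compact, self-adjoint and positive, self-adjointness and positivity being inherited from the Hermitian symmetry and nonnegativity of $b_{\xi}$. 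The spectral theorem then gives a sequence of positive eigenvalues $\mu_{m}(\xi)\downarrow 0$, each of finite multiplicity, with an orthonormal basis $\{\varphi^{n}_{\Omega,m}(\cdot;\xi)\}$ of $L^2_{c}(\Omega)$; each eigenvector lies in $H^1_{c}(\Omega)$, being in the range of $T_{\xi}$. Unwinding the shift, $\lambda^{n}_{\Omega,m}(\xi):=\mu_{m}(\xi)^{-1}-\mu$ gives eigenpairs of \eqref{dl8} in $\mathbb{R}\times H^1_{c}(\Omega)$ with $\lambda^{n}_{\Omega,m}(\xi)\to\infty$, and testing \eqref{dl8} with $\psi=\varphi^{n}_{\Omega,m}$ together with $b_{\xi}(\varphi,\varphi)\ge0$ forces $\lambda^{n}_{\Omega,m}(\xi)\ge0$.

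There is no genuinely new obstacle here beyond Proposition~\ref{dl1}; the two points that deserve a line of care are (i) that the ``floating constant'' boundary value and the boundary integral in \eqref{dl4} need no separate treatment --- they are precisely the natural boundary condition encoded by letting the test functions $\psi$ in \eqref{dl8} also range over $H^1_{c}(\Omega)$, so that formal integration by parts recovers \eqref{dl4}; and (ii) that the discreteness of the spectrum relies on $\Omega$ being bounded with (say) Lipschitz boundary, which is the setting in which the Vitali-covering construction \eqref{ll1} is used in any case. Everything else reduces to the routine verification of the hypotheses of Lax--Milgram and of the spectral theorem for compact self-adjoint operators.
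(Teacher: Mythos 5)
Your argument is correct and is essentially the route the paper intends: the paper itself gives no independent proof here, simply invoking the same Lax--Milgram/compact-resolvent/spectral-theorem analysis of \cite{TV1} used for Proposition \ref{dl1}, after observing that for fixed $n$ the assembled coefficient $A^{n}_{\omega}$ still lies in $\mathcal{M}(\alpha,\beta;\Omega)$ and that the floating-constant boundary condition is encoded in the test space $H^1_{c}(\Omega)$. Your write-up just makes those delegated steps explicit (G\aa rding inequality, compact self-adjoint solution operator, nonnegativity of the eigenvalues), so there is nothing genuinely different to compare.
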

\paragraph{Regularity of the ground state:}
In the next proposition, we announce the regularity result of ground state based on the Kato-Rellich analysis which has been done in \cite{TV1}.
\begin{proposition}
For each fixed $n\in\mathbb{N}$, we have
\begin{enumerate}
 \item Zero is the first eigenvalue of \eqref{dl8} at $\xi=0$ and it is an isolated point of the spectrum
with its algebraic multiplicity equal to one.
\item There exists an open neighborhood $\Omega^{\prime}_{n}$ around zero such that the 
first eigenvalue $\lambda^{n}_{\Omega,1}(\xi)$ is an analytic function on $\Omega^{\prime}_{n}$ and
there is a choice of the first eigenvector $\varphi^{n}_{\Omega,1}(x;\xi)$ satisfying
$$\xi \longmapsto \varphi^{n}_{\Omega,1}(\cdot;\xi) \in H^1_{c}(\Omega)\mbox{ is analytic on }\Omega^{\prime}_{n}\mbox{ and }\ \varphi^{n}_{\Omega,1}(x;0)= |\Omega|^{-1/2},$$
with the boundary normalization condition 
\begin{equation*}
\varphi_{\omega,1}(y;\eta) =\ \frac{1}{|\omega|^{1/2}} \quad \forall y\in\partial\omega \mbox{ and }\forall\eta\in \omega^{\prime}.
\end{equation*}
\end{enumerate}
\end{proposition}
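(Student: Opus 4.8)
The plan is to reproduce, on the fixed domain $\Omega$, the Kato--Rellich analysis carried out for the single cell $\omega$ in \cite{TV1}; throughout I take $\Omega$ to be connected. First I would record that at $\xi=0$ the weak problem \eqref{dl8} is the eigenvalue problem for the nonnegative closed form $a_0(\varphi,\psi)=\int_\Omega a^{n}_{kl}(x)\,\partial_l\varphi\,\overline{\partial_k\psi}\,dx$ on $L^2_c(\Omega)$. Testing with the constant $\varphi_0=|\Omega|^{-1/2}\in H^1_c(\Omega)$ shows $0$ is an eigenvalue with eigenvector $\varphi_0$, and it is the bottom of the spectrum since $a_0\ge 0$; conversely $a_0(\varphi,\varphi)=0$ together with $A^{n}_\omega\in\mathcal{M}(\alpha,\beta;\Omega)$ forces $\nabla\varphi=0$ a.e., so $\varphi$ is constant on $\Omega$ and the eigenspace is one–dimensional. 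Hence $\lambda^{n}_{\Omega,1}(0)=0$ with geometric multiplicity one.

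Next I would show $0$ is isolated. Writing $H^1_c(\Omega)=\mathbb{R}\varphi_0\oplus H^1_0(\Omega)$ (a function with constant boundary trace equals that constant plus an element $\varphi_{00}\in H^1_0(\Omega)$), a function $\varphi=c+\varphi_{00}$ of zero mean on $\Omega$ satisfies $c\,|\Omega|=-\int_\Omega\varphi_{00}$, whence $\|\varphi\|_{L^2(\Omega)}^2\le C\|\varphi_{00}\|_{L^2(\Omega)}^2\le C'\|\nabla\varphi_{00}\|_{L^2(\Omega)}^2=C'\|\nabla\varphi\|_{L^2(\Omega)}^2$ by the Poincaré inequality on $H^1_0(\Omega)$. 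Therefore $a_0(\varphi,\varphi)\ge\alpha\|\nabla\varphi\|^2\ge(\alpha/C')\|\varphi\|_{L^2(\Omega)}^2$ on the $L^2_c(\Omega)$–orthogonal complement of $\varphi_0$, i.e. on the zero–mean functions, so by min--max $\lambda^{n}_{\Omega,2}(0)\ge\alpha/C'>0$. Since the operator is self–adjoint, algebraic and geometric multiplicities coincide, which settles item (1).

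For item (2) I would invoke analytic perturbation theory. The shifted form coming from \eqref{dl4}--\eqref{dl8}, namely $a_\xi(\varphi,\psi)=\int_\Omega a^{n}_{kl}(\partial_l\varphi+i\xi_l\varphi)\overline{(\partial_k\psi+i\xi_k\psi)}\,dx$, is a degree–two polynomial in $\xi$ with coefficients bounded sesquilinear forms on $H^1_c(\Omega)$; complexifying $\xi$ it is a holomorphic family of forms of type (B), so $\{\mathcal{A}^{n}_\omega(\xi)\}$ is a holomorphic family of type (A). Because $\lambda^{n}_{\Omega,1}(0)=0$ is isolated and algebraically simple, Kato--Rellich theory yields a neighbourhood $\Omega'_n$ of $0$ on which exactly one eigenvalue $\lambda^{n}_{\Omega,1}(\xi)$ stays near $0$, remains simple, and is analytic, the Riesz projection $P(\xi)=\frac{1}{2\pi i}\oint_\Gamma(\zeta-\mathcal{A}^{n}_\omega(\xi))^{-1}\,d\zeta$ around a small loop $\Gamma$ enclosing $0$ being analytic as well. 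Then $\widetilde{\varphi}(\xi)=P(\xi)\varphi_0$ is an analytic $H^1_c(\Omega)$–valued eigenvector with $\widetilde{\varphi}(0)=\varphi_0$; its boundary trace is a constant $c(\xi)$, analytic in $\xi$ with $c(0)=|\Omega|^{-1/2}\neq0$, so after shrinking $\Omega'_n$ to keep $c(\xi)\neq0$ the normalised choice $\varphi^{n}_{\Omega,1}(\cdot;\xi)=|\Omega|^{-1/2}\,\widetilde{\varphi}(\xi)/c(\xi)$ is analytic into $H^1_c(\Omega)$, equals $|\Omega|^{-1/2}$ at $\xi=0$, and is identically $|\Omega|^{-1/2}$ on $\partial\Omega$.

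I expect the only genuinely non-routine point to be the Poincaré-type estimate on $H^1_c(\Omega)/\mathbb{R}\varphi_0$ used in the second step, since it is what pins the spectral gap away from zero and thereby licenses the whole perturbation argument; everything else is the machinery of \cite{TV1} transported to $\Omega$, together with the standard Riesz-projection and trace-normalisation device.
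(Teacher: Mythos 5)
Your proposal is correct and takes essentially the same route as the paper, which does not write out a proof but announces the result as a transcription to $\Omega$ of the Kato--Rellich analysis done in \cite{TV1} for the cell $\omega$: your steps (constant eigenfunction at $\xi=0$, Poincar\'e-type gap on the zero-mean complement of the constants in $H^1_c(\Omega)$, holomorphic dependence of the sesquilinear form on $\xi$, Riesz projection, and normalisation by the constant boundary trace) are precisely that analysis spelled out. One minor terminological slip: since the $\xi$-dependent co-normal condition makes the operator domain vary with $\xi$, the operators form a self-adjoint holomorphic family of type (B) arising from a holomorphic family of forms with fixed form domain $H^1_c(\Omega)$, not type (A); this does not affect your argument, which only uses the form-level holomorphy.
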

\paragraph{Derivatives of $\lambda^{n}_{\Omega,1}(\xi)$ and $\varphi^{n}_{\Omega,1}(\xi)$ at $\xi=0$:}
The procedure consists of differentiating the eigenvalue equation \eqref{dl4} 
for $\lambda^{n}_{\omega}(\xi)=\lambda^{n}_{\omega,1}(\xi)$ and $\varphi^{n}_{\Omega}(\cdot;\xi)=\varphi^{n}_{\Omega;1}(\cdot;\xi)$.
\paragraph{Step 1.\hspace{2pt} Zeroth order derivatives:}
We simply recall that $\varphi^{n}_{\Omega,1}(x;0) = |\Omega|^{-1/2}$ by our choice and $\lambda^{n}_{\Omega,1}(0) = 0.$
\paragraph{Step 2.\hspace{2pt} First order derivatives of $\lambda^{n}_{\Omega,1}(\xi)$ at $\xi =0$:}
Differentiating the equation \eqref{dl4} once with respect to $\xi_k$ and then 
taking scalar product with $\varphi^{n}_{\Omega,1}(\cdot;\xi)$ in $L^2(\Omega)$ at $\xi=0$, we get
\begin{equation*}\big\langle D_k (\mathcal{A}^{n}_\omega(0) - \lambda^{n}_{\Omega,1}(0))\varphi^{n}_{\Omega,1}(\cdot;0),\varphi^{n}_{\Omega,1} (\cdot;0)\big\rangle = 0.\end{equation*}
Then, using the fact that
\begin{align*}
D_k \mathcal{A}^{n}_\omega(0)\varphi^{n}_{\Omega,1}(\cdot;0) =\ iC^{n}_k(\varphi^{n}_{\Omega,1}(\cdot;0))&=\ - a^{n}_{kj}(x)\frac{\partial}{\partial x_j}(\varphi^{n}_{\Omega,1}(\cdot;0)) - \frac{\partial}{\partial x_j}(a^{n}_{kj}(y)\varphi^{n}_{\Omega,1}(\cdot;0))\\
                                                                                    &=\  -\frac{\partial}{\partial x_j}(a^{n}_{kj}(x)\varphi^{n}_{\Omega,1}(\cdot;0)),
\end{align*}                                                                                    
whose integral over $\Omega$ vanishes through integration by parts together with using the boundary conditions in \eqref{dl4}, it follows  that
\begin{equation}D_k \lambda^{n}_{\Omega,1}(0) = 0 \quad \forall\hspace{2pt} k = 1,\ldots,N. \end{equation}
\paragraph{Step 3. \hspace{2pt} First order derivatives of $\varphi^{n}_{\Omega,1}(\cdot;\xi)$ at $\xi=0$:}
By differentiating \eqref{dl4} once with respect to $\xi_k$ at zero, one has
\begin{align}
&\mathcal{A}^{n}_\omega(D_k\varphi^{n}_{\Omega,1}(\cdot;0)) =\ -\frac{\partial}{\partial x_j}(a^{n}_{kj}(x)\varphi^{n}_{\Omega,1}(\cdot;0)) \quad\mbox{in }\Omega, \label{dl6} \\
&D_k\varphi^{n}_{\Omega,1}(\cdot;0) =\ 0  \quad\mbox{ on }\ \partial\Omega  \label{dl7}\\
\mbox{and}\quad \int_{\partial\Omega}& A^{n}_\omega(x)\lb \nabla_x  D_k\varphi^{n}_{\Omega,1}(\cdot;0) + i\varphi^{n}_{\Omega,1}(\cdot;0)e_k\rb\cdot\nu \ d\sigma =\ 0 . \label{dl9}
\end{align}
As we can see along with boundary condition \eqref{dl7} for the elliptic equation \eqref{dl6}, 
the solution $D_k\varphi^{n}_{\Omega,1}(\cdot;0)$ gets uniquely determined. And the condition \eqref{dl9} is consistent as it comes 
via integrating the equation \eqref{dl6}. 
By comparing with \eqref{uB}, let us define
\begin{equation}\begin{aligned}\label{dl10}
 D_k\varphi^{n}_{\Omega,1}(x;0)= 
\left\{
 \begin{array}{ll}
 i|\Omega|^{-1/2}\ \varepsilon_{p,n}\big( w_{e_k}(\frac{x-y^{p,n}}{\varepsilon_{p,n}}) - (e_k,\frac{x-y^{p,n}}{\varepsilon_{p,n}})\big)\quad\mbox{ in  }\varepsilon_{p,n}\omega + y^{p,n},\\[1ex]
0    \quad\mbox{ otherwise. }
\end{array}
\right. 
\end{aligned}\end{equation}  
Then, clearly $D_k\varphi^{n}_{\Omega,1}(\cdot;0) \in H^1(\Omega)$ satisfies \eqref{dl7}. We also notice that
\eqref{dl10} solves the equation \eqref{dl6} in each $\{\varepsilon_{p,n}\omega + y^{p,n}\}_{p\in K}$.
In order to show it solves \eqref{dl6} in entire $\Omega$, we need to prove that 
\begin{equation}\label{ll5} \int_{\Omega} A^{n}_\omega(x)\lb \nabla_x D_k\varphi^{n}_{\Omega,1}(x;0) +i\varphi^{n}_{\Omega,1}(x;0) e_k \rb\cdot \nabla_x \varphi(x) dx =\ 0 \quad\forall\varphi\in\mathcal{D}(\Omega).\end{equation}
We have that 
\begin{multline}\label{dl11}
\int_{\Omega} A^{n}_\omega(x)\lb \nabla_x D_k\varphi^{n}_{\Omega,1}(x;0) + i\varphi^{n}_{\Omega,1}(x;0)e_k \rb\cdot \nabla_x \varphi(x) dx\\
=\  \frac{i}{|\Omega|^{1/2}}\sum_p \varepsilon_{p,n}^{N-1}\int_{\omega} A_\omega(y)\nabla_y w_{e_k}(y)\cdot \nabla_y\varphi_p(y) dy,
\end{multline}
where $y=\ \frac{x-y^{p,n}}{\varepsilon_{p,n}} \in \omega$ whenever $ x\in \varepsilon_{p,n}\omega + y^{p,n}$
and $\varphi_p(y) =\ \varphi(\varepsilon_{p,n}y+y^{p,n})$ with $\nabla_y\varphi_p(y)=\ \varepsilon_{p,n}\nabla_x \varphi(x)$.
Then, doing integration by parts on the right hand side of \eqref{dl11} together with using \eqref{hsw}, we get
\begin{align*}
\int_{\Omega} A^{n}_\omega(x)\lb \nabla_x D_k\varphi^{n}_{\Omega,1}(x;0) + i\varphi^{n}_{\Omega,1}(x;0)e_k \rb\cdot \nabla_x \varphi(x) dx &=\ \frac{i}{|\Omega|^{1/2}}\sum_p \varepsilon_{p,n}^{N-1} \int_{\partial\omega} Me_k\cdot \nu \varphi_p(y) d\sigma\\
&=\ \frac{i}{|\Omega|^{1/2}}Me_k \cdot \sum_p \varepsilon_{p,n}^{N-1}\int_{\omega} \nabla_z \varphi_p(y) dy \\
&=\ \frac{i}{|\Omega|^{1/2}}Me_k\cdot \int_{\Omega}\nabla_x \varphi(x) dx\\
&=\ 0 \quad\quad\forall \varphi\in \mathcal{D}(\Omega).
\end{align*}
Thus $D_k\varphi^{n}_{\Omega,1}(x;0)$ is rightly defined in \eqref{dl10} to satisfy \eqref{dl6}, \eqref{dl7}, \eqref{dl9} uniquely.
\paragraph{Step 4.\hspace{2pt} Second derivatives of $\lambda^{n}_{\Omega,1}(\xi)$ at $\xi=0$:}
By differentiating \eqref{dl4} twice with respect to $\xi_k$ and $\xi_l$, respectively
and then taking scalar product with $\varphi^{n}_{\Omega,1}(\cdot;\xi)$ in $L^2(\Omega)$ at $\xi=0$, we get
\begin{multline*}
\big\langle D^2_{kl}(\mathcal{A}^{n}_\omega(0) - \lambda^{n}_{\Omega,1}(0))\varphi^{n}_{\Omega,1}(\cdot;0), \varphi^{n}_{\Omega,1}(\cdot;0)\big\rangle \\
+ \big\langle [D_k(\mathcal{A}^{n}_\omega(0) - \lambda^{n}_{\Omega,1}(0))]D_l\varphi^{n}_{\Omega,1}(\cdot;0), \varphi^{n}_{\Omega,1}(\cdot;0)\big\rangle\\
+ \big\langle [D_l(\mathcal{A}^{n}_\omega(0)- \lambda^{n}_{\Omega,1}(0))]D_k \varphi^{n}_{\Omega,1}(\cdot;0), \varphi^{n}_{\Omega,1}(\cdot;0)\big\rangle = 0.
\end{multline*}
By using the information obtained in the previous steps, we get
\begin{equation*}\begin{aligned}
\frac{1}{2}&D^2_{kl}\lambda^{n}_{\Omega,1}(0) =\ \frac{1}{|\Omega|}\int_{\Omega} a^{n}_{kl}(x)dx -
\frac{1}{2|\Omega|}\int_{\Omega}\Big[C^{n}_k(D_l\varphi^{n}_{\Omega,1}(x;0)) + C_l(D_k\varphi^{n}_{\Omega,1}(x;0))\Big]dx\\
&=\ \frac{1}{2|\Omega|}\sum_p \int_{\varepsilon_{p,n}\omega +y^{p,n}}A_\omega\Big(\frac{x-y^{p,n}}{\varepsilon_{p,n}}\Big)\Big( \nabla w_{e_k}\Big(\frac{x-y^{p,n}}{\varepsilon_{p,n}}\Big)\cdot e_l + \nabla w_{e_l}\Big(\frac{x-y^{p,n}}{\varepsilon_{p,n}}\Big)\cdot e_k \Big) dx\\
&=\  \frac{1}{2} \int_{\omega} A_\omega(y)\big( \nabla_y w_{e_k}(y)\cdot e_l\ + \nabla_y w_{e_l}(y)\cdot e_k\big) dy\\
&=\ m_{kl} \quad\forall  k,l=1,\ldots,N,
\end{aligned}
\end{equation*}
due to the integral identity \eqref{dl12},
which are indeed the homogenized coefficients governed with the Hashin-Shtrikman constructions.
We see that $\frac{1}{2}D^2_{kl}\lambda^{n}_{\Omega,1}(0)$ is independent of $n$. Thus, it
does not depend on the choice of translations $y^{p,n}$ and the scales $\varepsilon_{p,n}$
as long as they are bound to satisfy the Vitali covering criteria \eqref{ll1}.
\paragraph{Step 5. Higher order derivatives:}
In general, the process can be continued indefinitely to compute all derivatives of
$\lambda^{n}_{\Omega,1}(\xi)$ and $\varphi^{n}_{\Omega,1}(\cdot;\xi)$ at $\xi = 0$. 
In particular, the third order derivative is zero, i.e. $D^{q}\lambda^{n}_{\Omega,1}(0)=0, \,|q|=3$.
However, we are interested in the fourth order derivatives of $\lambda^{n}_{\Omega,1}(0)$, i.e.
$D^4_{klmn}\lambda^{n}_{\Omega,1}(0)$, which is in general a non-positive definite tensor and can be defined as follows: the second order derivative of the eigenvector $D^2_{kl}\varphi^{n}_{\Omega,1}(\cdot;0)\in H^1_0(\Omega)$ solves
\begin{equation}\begin{aligned}\label{dl15}
&\mathcal{A}^{n}_\omega D^2_{kl}\varphi^{n}_{\Omega,1}(x;0)= -( a^{n}_{kl}(x) - m_{kl} )\varphi^{n}_{\Omega,1}(x;0) - iC^{n}_k(D_l(\varphi^{n}_{\Omega,1}(x;0)) - iC_l(D_k\varphi^{n}_{\Omega,1}(x;0))\mbox{ in }\Omega,\\
&D^2_{kl}\varphi^{n}_{\Omega,1}(x;0) = 0 \mbox{ on }\partial\Omega \mbox{ and } \int_{\partial\Omega} A^{n}_\omega(x)\nabla_x D^2_{kl}\varphi^{n}_{\omega,1}(x;0)\cdot\nu \ d\sigma= 0 .
\end{aligned}\end{equation}
The above equation \eqref{dl15} has an unique solution and we would like to define as 
\begin{equation}\begin{aligned}\label{dl16}
D^2_{kl}\varphi^{n}_{\Omega,1}(x;0) =
\left\{
\begin{array}{ll}
 |\Omega|^{-1/2}\ \varepsilon^2_{p,n} \widetilde{w}_{kl}\big(\frac{x-y^{p,n}}{\varepsilon_{p,n}}\big)\quad\mbox{ in  }\varepsilon_{p,n}\omega + y^{p,n},\\[1ex]
0    \quad\mbox{ otherwise,}
\end{array}
\right. 
\end{aligned}\end{equation}  
where $\widetilde{w}_{kl}$ is defined likewise by \eqref{hsw} as follows: after extending $A_\omega\in\mathcal{M}(\alpha,\beta;\omega)$ by $A_\omega(x) = M$ for $x \in \mathbb{R}^N\smallsetminus \omega$, $\widetilde{w}_{kl}\in H^1(\mathbb{R}^N)$  satisfies
\begin{equation}\begin{aligned}\label{dl14}
- div (A_\omega\nabla \widetilde{w}_{kl}(x)) &= -( a^\omega_{kl}(x) - m_{kl} ) - iC^{\omega}_k(w_{e_l}(x)-x_l) - iC^{\omega}_l(w_{e_k}(x)-x_k)\quad\mbox{ in }\mathbb{R}^N,\\
  \widetilde{w}_{kl}(x) &=\ 0 \quad\mbox{ in }\mathbb{R}^N \smallsetminus \omega,
\end{aligned}\end{equation}
where $C^{\omega}_k(\varphi) = - a^{\omega}_{kj}(x)\frac{\partial \varphi}{\partial x_j} - \frac{\partial}{\partial x_j}(a^{\omega}_{kj}(x)\varphi)$.\\
\\
If such $\widetilde{w}_{kl}\in H^1(\mathbb{R}^N)$ exists $\forall k,l=1,\ldots,N$, then following the same arguments
presented in Step 3, $D^2_{kl}\varphi^{n}_{\Omega,1}(\cdot;0)$ defined in \eqref{dl16} belongs to $ H^1_0(\Omega)$ and solves \eqref{dl15} in $\Omega$.\\
\\
Notice that $w_{kl}\in H^1_0(\omega)$ defined in \eqref{dl17} and $\widetilde{w}_{kl}\in H^1_0(\omega)$ solves the same equation \eqref{dl14}.
The only difference occurs in the co-normal derivative of $\widetilde{w}$ on $\partial\omega$, because we have
\begin{center}$\nabla \widetilde{w}_{kl}(x)\cdot \nu =\ 0 \ \mbox{ on }\partial\omega.$\end{center}
In the next section (cf. Proposition \ref{ll2} below), we show that in the case of two-phase spherical 
inclusions (see Example \ref{si}) such $\widetilde{w}_{kl}$ exists and is equal to $w_{kl}$ for each $k,l=1,\ldots,N.$ \\
\\
We further define
\begin{center}$ X^{n}_{\Omega,1}= -i|\Omega|^{1/2}\ \xi_kD_k\varphi^{n}_{\Omega,1}(\cdot;0)$ and  $X^{n}_{\Omega,2}= |\Omega|^{1/2}\ \xi_k\xi_l D^2_{kl}\varphi^{n}_{\Omega,1}(\cdot;0).$\end{center}
Then, following \cite[Proposition 3.2]{COVB}, the fourth order derivative of $\lambda^{n}_{\Omega,1}(\xi)$ at $\xi=0$ defines as 
\begin{equation}\begin{aligned}\label{dl18}
\frac{1}{4!}D^4_{klmn}\lambda^{n}_{\Omega,1}(0)\xi_k\xi_l\xi_m\xi_n&= -\frac{1}{|\Omega|}\int_{\Omega} \mathcal{A}^{n}\Big( X^{n}_{\Omega,2} - \frac{1}{2}(X^{n}_{\Omega,1})^2\Big)\cdot\Big( X^{n}_{\Omega,2} - \frac{1}{2}(X^{n}_{\Omega,1})^2\Big)  dx \\
&\leq 0.
\end{aligned}\end{equation}
Moreover, using \eqref{dl10} and \eqref{dl16}, the above expression \eqref{dl18} becomes 
\begin{equation}\begin{aligned}\label{ll3}
 \frac{1}{4!}D^4_{klmn}\lambda^{n}_{\Omega,1}(0)\xi^4 &=\ -\frac{1}{|\Omega|}\sum_p \varepsilon_{p,n}^{N+2} \int_{\omega} \mathcal{A}\Big( X^{(2)}_{\omega} - \frac{1}{2}(X^{(1)}_{\omega})^2\Big)\cdot\Big( X^{(2)}_{\omega} - \frac{1}{2}(X^{(1)}_{\omega})^2\Big)  dy \\
                                                      &=\ \frac{|\omega|}{|\Omega|}\sum_p \varepsilon_{p,n}^{N+2}\cdot \frac{1}{4!}D^4_{klmn}\lambda_{\omega,1}(0)\xi^4,
\end{aligned}\end{equation}
where the last equality follows from \eqref{dl20}.
\begin{remark}
The above equality \eqref{ll3} establishes the relation between the fourth order derivatives of $\lambda_{\omega,1}$ and $\lambda_{\Omega,1}$.
Remember that the first and second order derivatives of them are equal.
\end{remark}
Here we define an approximating dispersion tensor $d^{n}_{HS}$ for the medium with respect to the highest scale factor $\kappa_n^2$ as follows:
$$ \frac{1}{4!}D^4_{klmn}\lambda^{n}_{\Omega,1}(0) =\ \kappa_n^2 d^{n}_{HS}.$$
Then, as $n\rightarrow \infty$, we define the \textit{Burnett coefficient} or the dispersion tensor for the medium:
\begin{equation}\label{dl21} d_{HS}=\ \underset{n \rightarrow \infty}{limsup\ }d^{n}_{HS} =\  \frac{|\omega|}{|\Omega|}\lb\underset{n\rightarrow \infty}{limsup\ }\kappa_n^{-2}\sum_p \varepsilon_{p,n}^{N+2}\rb\cdot \frac{1}{4!}D^4_{klmn}\lambda_{\omega,1}(0).\end{equation}
The above limit always exists finitely. It can be seen through the following simple estimate:\begin{align*}
\sum_p\varepsilon_{p,n}^{N+2} \leq \kappa_n^2 \sum_p \varepsilon_{p,n}^N = \kappa_n^2 \frac{|\Omega|}{|\omega|}\quad\mbox{ or } \quad\kappa_n^{-2}\sum_p \varepsilon_{p,n}^{N+2} \ \mbox{ is uniformly bounded. }
\end{align*}
The identity  \eqref{dl21} reads as $d_{HS}$ is a purely locally defined macro quantity incorporating only various scales associated with the structure. For each $n$, we have the following approximation: 
$$ \lambda^{n}_{\Omega,1}(\xi)=\ M\xi^2 + \kappa_n^2\ d_{HS}\ \xi^4 + o(\kappa_n^2), \quad \xi\in\Omega^{\prime}_{n}.$$
\begin{remark}
Remember that the above expression \eqref{dl21} is valid only when $A$ is equivalent to $M$ through the existence of 
$w_{e_k}\in H^1_{loc}(\mathbb{R}^N)$ satisfying \eqref{hsw} and with the existence of $\widetilde{w}_{kl}\in H^1(\mathbb{R}^N)$ satisfying \eqref{dl14}, for each $k,l=1,\ldots,N$.
In the next section, as an example of ``Spherical inclusions in two-phase medium'', we establish their existence. 
\end{remark}
\begin{remark}
For the periodic micro-structures with the uniform $\varepsilon$-scaling 
and translation, the above definition \eqref{dl21} of the dispersion tensor coincides with the coefficient  $d_{Y}$ defined in
\eqref{dl22}.\hfill\qed
\end{remark}

Motivated from the optimal design and so on, 
the interesting question can be taken into account in 
this matter that which Vitali coverings are responsible 
for the minimum or maximum value for $d_{HS}$. Regarding that, we prove 
the following conjecture stated below.
\paragraph{Conjecture:}\textit{Minimizer of the dispersion tensor is unique among $2$-phase periodic Hashin-Shtrikman micro-structures of a given proportion and it is given by the
Apollonian-Hashin-Shtrikman micro-structure.}\\
\\
This conjecture was arrived at by a previous study of the same problem in
one-space dimension \cite{CMSV}. Roughly speaking, the result in one dimension says 
that the value of ``$d$'' increases when we increase the number of interfaces between
the two phases in the micro-structure. At the maximum value of ``$d$'', we have a
continuum of interfaces and at the minimum value, there is an unique
minimizer with a single interface. We prove it in the following section. 

\section{Spherical inclusions in 2-phase periodic Hashin- Shtrikman micro-structures}
\setcounter{equation}{0}

In the class of periodic spherical Hashin-Shtrikman micro-structures,
we consider the unit cell $Y=[0,1]^N$ in $\mathbb{R}^N$ and identify with $\mathbb{R}^N$ 
through $\mathbb{Z}^N-$translation invariance. We first  find a 
Hashin-Shtrikman construction to cover the whole space $\mathbb{R}^N$ and if it is invariant 
under $\mathbb{Z}^N-$translations, then we will consider it as a Hashin-Shtrikman structure
for $Y$ and conversely. So, let us start with a cover for $\mathbb{R}^N$ by a sequence 
of reduced copy of disjoint balls $B(y^p,\varepsilon_p)= \varepsilon_{p}B(0,1) + y^{p}$ with center $y^{p}$ and  radius $\varepsilon_p$ such that
\begin{equation}\label{ll7}
\begin{aligned}
 & meas\big(\mathbb{R}^N \smallsetminus \underset{p\in \mathbb{K}}{\cup} B(y^p,\varepsilon_p)\big) = 0,\mbox{ where }K \mbox{ is some infinite countable set}\\
\mbox{and }\ & m\in\mathbb{Z}^N,\ \forall p\in {K}, \ m+ B(y^p,\varepsilon_p) = B(y^p+m,\varepsilon_p) \in \underset{p\in K}{\cup} B(y^p,\varepsilon_p).\\
\mbox{Moreover, }\ &\forall m\in \mathbb{Z}^N, \ meas(\mathbb{R}^N \smallsetminus \underset{p\in \mathbb{K}}{\cup} (m+ B(y^p,\varepsilon_p)) ) = 0.
\end{aligned}
\end{equation}
Consequently, the unit periodic cell $Y$ is understood as $Y= \underset{p\in K}{\cup}\big( [0,1]^N\cap B (y^p,\varepsilon_p)\big)$.\\
\\
Let us now consider $a_B(y)$ be the two-phase conductivity profile in  $B(0,1)$, defined as
follows:\begin{equation*}\begin{aligned}
a_B(y)=\ a(r) =
\left\{
\begin{array}{ll} 
\alpha  \quad\mbox{if } |y| < R,\\
\beta  \quad\mbox{if }  R < |y| < 1,
\end{array}
\right. 
\end{aligned}\end{equation*}
with  $0<\alpha\leq \beta < \infty$. We define $\theta = R^N$ as the volume proportion of the two-phase profile.\\
\\
Then $a_B(y)$ is \textit{equivalent} to some $m$ ($\alpha \leq m\leq \beta$) (see \cite[Page no. 282]{T}), i.e., after extending $a_B(y)$ by $a_B(y)=m$ in $\mathbb{R}^N\smallsetminus B(0,1)$, 
for each unit vector $e_l \in \mathbb{R}^N$ ($l=1,\ldots,N$), there exists $w_{e_l}\in  H^{1}_{loc}(\mathbb{R}^N)$ satisfying\begin{equation}\label{ED1}
- div(a_B(y)\nabla w_{e_l}(y))= 0 \quad\mbox{in }\mathbb{R}^N,\quad  w_{e_l}(y) = y\cdot e_l  \quad\mbox{in }\mathbb{R}^N \smallsetminus B(0,1).
\end{equation}
The co-normal flux satisfies 
\begin{equation*}a_B(y)\nabla w_{e_l}(y)\cdot \nu =m \quad\mbox{on }\partial B(0,1),\end{equation*}
where $m$ satisfies the relation
\begin{equation}\label{ED9}\frac{m - \beta}{m + (N-1)\beta} = \theta \frac{\alpha - \beta}{\alpha + (N-1)\beta}.\end{equation} 
Now, by homothecy, we extend  $a_B$ to the entire $\mathbb{R}^N$ defining
$$a_{\mathbb{R}^N}(y)= a_B\Big(\frac{y-y^p}{\varepsilon_p}\Big)\quad\mbox{in }B(y^p,\varepsilon_p)\ \mbox{ a.e. in }\mathbb{R}^N$$
and reveal that $a_{\mathbb{R}^N}(y)$ is a $Y-$periodic function due to \eqref{ll7}, i.e. $a_{\mathbb{R}^N}\in L^{\infty}_{\#}(Y)$. 
We define 
\begin{center}$a_Y(y) =\ a_{\mathbb{R}^N}(y),\ \ $ $y\in Y=[0,1]^N$.\end{center}
Next, we set
\begin{center}$ a^{\varepsilon}(x) = a_{Y}\big(\frac{x}{\varepsilon}\big),\ x\in\mathbb{R}^N\ \mbox{and } \frac{x}{\varepsilon}= y\in Y$\end{center}
and extend it to the whole $\mathbb{R}^N$ by $\varepsilon$-periodicity with a small period of scale $\varepsilon$, 
which is considered as two-phase periodic Hashin-Shtrikman micro-structures with spherical inclusions.
\paragraph{(i) Homogenized coefficients:}
\noindent
The sequence $a^{\varepsilon} \xrightarrow{ H-\mbox{converges }} m$. 
One defines $\chi_{l}\in H^1_{\#}(Y)$ (see \cite[page no. 195]{JKO}) solving the cell--problem in the periodic cell $Y$:
\begin{equation}\label{PN1} 
-div(a_{Y}(y)(\nabla{\chi_l}(y) + e_l)) = 0 \ \mbox{ in }Y,\ \mbox{ where } \chi_l\in H^1_{\#}(Y)\ \mbox{ with }\ \int_Y \chi_l dy = 0.
\end{equation} 
And then, one need to show that the homogenized coefficient $a^{*}$, defined below, is equal to $m$, i.e. 
\begin{equation}\label{ED3}
 a^{*}= \frac{1}{|Y|}\int_{Y} a_{Y}(y)(\nabla\chi_l(y) + e_l)\cdot(\nabla\chi_l(y) + e_l)dy\ =\ m.
\end{equation}
Let us first look for a solution of the following extended equation in the entire space $\mathbb{R}^N$: 
\begin{equation}\label{ll4}
-div (a_{\mathbb{R}^N}(y)(\nabla \chi_{\mathbb{R}^N} + e )) = 0 \ \mbox{ in }\mathbb{R}^N,\ \chi_{\mathbb{R}^N}\in H^1_{loc}(\mathbb{R}^N),
\end{equation}
where $e$ is some canonical basis vector in $\mathbb{R}^N$.\\
\\
Prior to that, we define 
\begin{equation}\label{ll6}
\chi_{\mathbb{R}^N}(y) =\ \varepsilon_{p}\Big( w_{e}\Big(\frac{y-y^{p}}{\varepsilon_{p}}\Big) - \Big(e\cdot\frac{y-y^{p}}{\varepsilon_{p}}\Big) \Big)\ \ \mbox{ if  }y\in B(y^p,\varepsilon_p).
\end{equation}
Then, we see that $\chi_{\mathbb{R}^N}$ is a $H^1_{loc}(\mathbb{R}^N)-$function and  it solves the problem  \eqref{ll4} restricted into each balls $\{B(y^p,\varepsilon_p)\}_{p\in K}$.
Moreover, for any $\varphi \in \mathcal{D}(\mathbb{R}^N)$, we have 
$$ \int_{\mathbb{R}^N} a_{\mathbb{R}^N}(y)(\nabla\chi_{\mathbb{R}^N}(y) + e)\cdot \nabla \varphi (y)\ dy =0.$$
The above equality follows in a similar manner that we did before for \eqref{ll5}. It establishes that \eqref{ll6} solves \eqref{ll4} locally in $\mathbb{R}^N$.\\
Now, we claim that, $\chi_{\mathbb{R}^N}(y)$ is a $Y-$periodic function, i.e. $\chi_{\mathbb{R}^N}\in H^1_{\#}(Y)$.
It simply follows by using \eqref{ll7}, i.e. for $y\in \mathbb{R}^N$ and $m\in \mathbb{Z}^N$, we have
\begin{align*}
\chi_{\mathbb{R}^N}(y-m) &=\ \varepsilon_p\Big( w_{e}\Big(\frac{y-m-y^{p}}{\varepsilon_{p}}\Big) - \Big(e\cdot\frac{y-m-y^{p}}{\varepsilon_{p}}\Big) \Big)\ \mbox{ if } y-m \in B(y^p,\varepsilon_p)\\
                         &=\  \varepsilon_{p}\Big( w_{e}\Big(\frac{y-y^{p^{\prime}}}{\varepsilon_{p}}\Big) - \Big(e\cdot\frac{y-y^{p^{\prime}}}{\varepsilon_{p}}\Big) \Big)\ \mbox{ if } y \in B(y^{p^{\prime}},\varepsilon_{p})= B(m+y^p,\varepsilon_p)\\ 
                         &=\ \chi_{\mathbb{R}^N}(y) \quad\mbox{ (due to }\eqref{ll7} ).
\end{align*}
We define
\begin{center}$ \widetilde{\chi_{Y}}(y)=  \chi_{\mathbb{R}^N}(y) ,\ \ $ $y\in Y=[0,1]^N$.\end{center}
Then $\widetilde{\chi_{Y}}(y)\in H^1_{\#}(Y)$ and by simply considering $\chi_{Y}(y)= \widetilde{\chi_{Y}}(y)- \frac{1}{|Y|}\int_Y\widetilde{\chi_{Y}}(y)dy$, it
solves \eqref{PN1} for each $e= e_l$. Finally, by taking $\chi_Y(y)$ in the integral identity \eqref{ED3} and using \eqref{ED1}, it follows the homogenized coefficient $m$. Precisely, we have 
\begin{align*}
\frac{1}{|Y|}\int_{Y} a_Y(y)(\nabla \chi_Y(y) + e)&\cdot(\nabla \chi_Y(y) + e) dy  \\
=& \ \frac{1}{|Y|}\sum_p \int_{B(y^p,\varepsilon_p)\cap Y} a\Big(\frac{y-y^{p}}{\varepsilon_{p}}\Big)\Big|\nabla w_{e}\Big(\frac{y-y^{p}}{\varepsilon_{p}}\Big)\Big|^2 dy\\
=&\ \frac{1}{|B(0,1)|} \int_{B(0,1)} a(z) |\nabla w_{e_l}(z)|^2 dz\ =\ m.
\end{align*}
It is now remaining to establish the relation \eqref{ED9}. We seek the solution of the above equation \eqref{ED1} in the following form 
\begin{equation}\label{ED7}w_{e_l}(y) =\ y_lf(r),\ y\in B(0,1),\end{equation}
where $f(r)$ is given by
\begin{equation}\begin{aligned}\label{ED5}
f(r) =
\left\{
\begin{array}{ll}
 \widetilde{b_1} \ &\mbox{ if }r< R,\\[1ex]
\widetilde{b_2} + \frac{\widetilde{c}}{r^N} \ &\mbox{ if }R < r < 1,\\[1ex]
1 \ &\mbox{ if }1 < r.
\end{array}
\right.
\end{aligned}\end{equation}
In order to keep the solution $w_{e_l}(y)$ and flux $a(r)(f(r)+rf^{\prime}(r))$ to be continuous across
the inner boundary $(r=R)$ and the outer boundary $(r=1)$,
we need to impose the following conditions: \begin{equation}\begin{aligned}\label{b1b2}
\widetilde{b_1} =\ \widetilde{b_2} + \frac{\widetilde{c}}{r_1^N}, &\ \mbox{  }\ \alpha\widetilde{b_1} =\ \beta\Big(\widetilde{b_2} + \frac{(1-N)\widetilde{c}}{r_1^N} \Big),\\
\widetilde{b_2} + \widetilde{c} =\ 1 &\ \mbox{ and }\ \beta(\widetilde{b_2} + (1-N)\widetilde{c}) =\ m. \\
\end{aligned}\end{equation}
Then, solving $(\widetilde{b_1},\widetilde{b_2},\widetilde{c})$ in terms of $(\alpha,\ \beta,\ \theta)$ from the first three equation of \eqref{b1b2},  we have
\begin{equation}\label{PN2}
\widetilde{b_1} = \ \frac{N\beta}{(1-\theta)\alpha + (N+\theta -1)\beta},\quad \widetilde{b_2} = \ \frac{(1-\widetilde{b_1}\theta)}{(1-\theta)} \quad\mbox{and}\quad \widetilde{c} = \frac{(\widetilde{b_1} -1)\theta}{(1-\theta)}
\end{equation}
and finally putting it into the fourth equation of \eqref{b1b2}, $m$ can be written as in \eqref{ED9}.

\paragraph{(ii) Dispersion coefficient:}
In the periodic Hashin-Shtrikman structures we denote the dispersion tensor by $d_{PHS}$. 
Concerning to our case, we recall the integral expression \eqref{dy} to write $d_{PHS}$ as follows:
\begin{equation}\label{dphs}
d_{PHS}\eta^4 = -\frac{1}{|Y|}\int_{Y} \mathcal{A}_Y\Big( X^{(2)}_Y -\frac{(X^{(1)}_Y)^2}{2}\Big)\cdot\Big( X^{(2)}_Y -\frac{(X^{(1)}_Y)^2}{2}\Big) dy,
\end{equation}
where $\mathcal{A}_Y$, $X^{(1)}_Y$, $X^{(2)}_Y$ are defined in \eqref{dl19}.\\
\\
Let us denote $X_{B(0,1)}^{(1)}(y) = \eta_k ( w_{e_k}(y) - y_k )$, where $w_{e_k}$ is the solution of \eqref{ED1}
and  $X_{B(0,1)}^{(2)}= \eta_k\eta_l w_{kl}$, where $w_{kl}$ is the solution of  the following auxiliary cell-equation in $B(0,1)$:
\begin{equation}\begin{aligned}\label{auxN}
-div(a_B(y)\nabla w_{kl}(y)) &=\ a_B(y)\delta_{kl} - m\delta_{kl} - \frac{1}{2}\lb C^B_{l}(w_{e_k}(y)-y_k) + C^B_{k}(w_{e_l}(y) -y_l)\rb  \mbox{ in }B(0,1),\\
w_{kl}(y) &=\ 0 \ \mbox{ on }\partial B(0,1),
\end{aligned}\end{equation} 
where $C^{B}_k(\varphi) = - a_B(y)\frac{\partial \varphi}{\partial x_k} - \frac{\partial}{\partial x_k}(a_B(y)\varphi)$.\\
\\
We observe that \eqref{auxN} is an elliptic partial differential equation with  
Dirichlet boundary condition which possess an unique solution $w_{kl} \in H^1_0(B(0,1))$. 
Having that, we claim that the co-normal derivative of $w_{kl}$ on $\partial B(0,1)$ is 
zero, i.e.
\begin{equation}\label{fed12}
\nabla w_{kl} (y)\cdot \nu = 0 \quad\mbox{on } \partial B(0,1).
\end{equation}
\begin{remark}
If we extend $a_{B}(y)$ by $m$ in $\mathbb{R}^N\smallsetminus B(0,1)$ and $w_{e_k}(y)$ by $y_k$ in $\mathbb{R}^N\smallsetminus B(0,1)$, then \eqref{auxN} becomes 
\begin{equation}\label{ED6}-div(m\nabla \widetilde{w_{kl}}(y)) = 0 \quad\mbox{in }\ \mathbb{R}^N\smallsetminus B(0,1)\quad\mbox{with }\ \ \widetilde{w_{kl}}(y) = 0 \ \mbox{on }\partial B(0,1).\end{equation}
If $ \widetilde{w_{kl}} \in H^1(\mathbb{R}^N\smallsetminus B(0,1))$, then simply using the maximum principle (see \cite[Page no. 164, (3.10)]{KS}), we
get $\widetilde{w_{kl}}(y) = 0 $ in $\mathbb{R}^N\smallsetminus B(0,1)$, which says that $0$ is the natural extension. \\
\\
Let us define
\begin{equation}\begin{aligned}\label{ll8}
 \widetilde{w_{kl}} =
 \left\{
 \begin{array}{ll}
  w_{kl} \ \mbox{ in } B(0,1),\\[1ex]
  0 \ \mbox{ in }\mathbb{R}^N\smallsetminus B(0,1).
  \end{array}
  \right.
\end{aligned}\end{equation}
Now, if \eqref{ll8} solves both \eqref{auxN} and \eqref{ED6} as a $H^1(\mathbb{R}^N)-$function,
then from the continuity of the boundary normal flux, we have
$$ a_B(y)\nabla w_{kl} (y)\cdot \nu = 0 \quad\mbox{on } \partial B(0,1)\quad\mbox{or }\ \nabla w_{kl} (y)\cdot \nu = 0 \quad\mbox{on } \partial B(0,1).$$
However, at this moment we don't know whether $\widetilde{w_{kl}}$ is a $H^1(\mathbb{R}^N)-$function or not.\\
Secondly, as we have experienced from the previous case, it is required to have such extension property in order to get 
$\chi_{kl}\in H^1_{\#}(Y)$ from $w_{kl}\in H^1_0(B)$, which solves the cell-problem \eqref{ll9}.
\end{remark}
\begin{proposition}\label{ll2}
 The unique solution $w_{kl}$ of \eqref{auxN} satisfies  the additional boundary condition \eqref{fed12}.
\end{proposition}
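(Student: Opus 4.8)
The plan is to prove \eqref{fed12} by first recording why it is the natural condition and then exploiting the radial structure forced by the spherical geometry. Note that, since $w_{kl}=0$ on $\partial B(0,1)$, condition \eqref{fed12} is equivalent to saying that the zero--extension $\widetilde{w_{kl}}$ of \eqref{ll8} belongs to $H^{1}(\mathbb{R}^{N})$ \emph{and} solves the full--space equation \eqref{dl14}: indeed $\widetilde{w_{kl}}\in H^{1}(\mathbb{R}^{N})$ automatically (extension by zero of an $H^{1}_{0}(B(0,1))$ function), it trivially solves the equation off $\overline{B(0,1)}$ because there the source vanishes (outside $B(0,1)$ one has $a_{B}=m$, so $a^{\omega}_{kl}-m_{kl}=0$, and $w_{e_{k}}-y_{k}=0$, so the operators $C^{\omega}_{k}$ contribute nothing, and there is no singular mass on $\partial B(0,1)$ since $w_{e_{k}}-y_{k}$ vanishes there continuously), and the only remaining obstruction is a jump of the co--normal flux of $\widetilde{w_{kl}}$ across $\partial B(0,1)$ — whose outer trace is $m\nabla 0\cdot\nu=0$ — so its absence is exactly \eqref{fed12}. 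Thus it suffices to prove \eqref{fed12} directly, and this I would do using rotational symmetry.

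By \eqref{ED7}, $w_{e_{k}}(y)=y_{k}f(|y|)$ and $a_{B}(y)=a(|y|)$, so the right--hand side of \eqref{auxN} is an $O(N)$--covariant symmetric two--tensor field. Since \eqref{auxN} has a unique solution, for every $Q\in O(N)$ the field $y\mapsto\sum_{k',l'}Q_{kk'}Q_{ll'}w_{k'l'}(Q^{-1}y)$ solves the same boundary value problem and hence equals $w_{kl}(y)$; the general symmetric two--tensor field with this invariance which is regular at the origin has the form
\[
w_{kl}(y)=\delta_{kl}\,h(r)+y_{k}y_{l}\,\ell(r),\qquad r=|y|,
\]
for scalar radial functions $h,\ell$.

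Next I would insert this ansatz into \eqref{auxN}. Using $\Delta\big(\delta_{kl}h\big)=\delta_{kl}\big(h''+\tfrac{N-1}{r}h'\big)$ and $\Delta\big(y_{k}y_{l}\ell\big)=2\delta_{kl}\ell+y_{k}y_{l}\big(\ell''+\tfrac{N+3}{r}\ell'\big)$, and computing the $\delta_{kl}$-- and $y_{k}y_{l}$--components of the source of \eqref{auxN} in each phase — in $\{r<R\}$, where $a\equiv\alpha$ and $f\equiv\widetilde{b_{1}}$, the source is a constant multiple of $\delta_{kl}$; in $\{R<r<1\}$, where $a\equiv\beta$ and $f=\widetilde{b_{2}}+\widetilde{c}\,r^{-N}$, it has a $\delta_{kl}$--part of the form $\mathrm{const}+\mathrm{const}\cdot r^{-N}$ and a $y_{k}y_{l}$--part $\propto r^{-N-2}$ — one obtains a triangular pair of second--order ODEs: one for $\ell$ alone, then one for $h$ forced by $\ell$. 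The relevant homogeneous solutions are $\{1,r^{-(N+2)}\}$ for $\ell$ and $\{1,r^{2-N}\}$ for $h$ (logarithmic modes if $N=2$), with explicit power/polynomial particular solutions. Keeping only the branches regular at $r=0$ in $\{r<R\}$, imposing at $r=R$ continuity of $h$, $\ell$ and of the two co--normal flux components $a(r)h'$ and the $y_{k}y_{l}$--flux component, and imposing at $r=1$ the Dirichlet condition of \eqref{auxN} (which forces $h(1)=\ell(1)=0$), one determines all integration constants uniquely.

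Finally, differentiating the ansatz gives, on $\partial B(0,1)$ where $\nu=y$,
\[
\nabla w_{kl}\cdot\nu=\delta_{kl}\,h'(1)+y_{k}y_{l}\big(2\ell(1)+\ell'(1)\big)=\delta_{kl}\,h'(1)+y_{k}y_{l}\,\ell'(1),
\]
using $\ell(1)=0$; hence \eqref{fed12} is equivalent to $h'(1)=\ell'(1)=0$. It remains to check this from the explicit constants, using the relations \eqref{PN2}--\eqref{ED9} defining $\widetilde{b_{1}},\widetilde{b_{2}},\widetilde{c}$ and $m$. Equivalently, and perhaps more transparently, one argues that the unique decaying solution $\mathcal{W}$ on $\mathbb{R}^{N}$ of the equation obtained from \eqref{auxN} by extending $a_{B}$ by $m$ and $w_{e_{k}}$ by $y_{k}$ (the source being, as noted above, naturally zero outside $\overline{B(0,1)}$; cf.\ the Remark preceding this Proposition and \eqref{ll8}--\eqref{ED6}) vanishes identically outside $B(0,1)$: there $\mathcal{W}$ is harmonic of the same tensorial form, so its leading far field is governed by the first two (the $\delta_{kl}$-- and $y_{k}y_{l}$--) moments of the compactly supported source, and these vanish precisely by virtue of the Hashin--Shtrikman identity \eqref{ED9} together with \eqref{PN2}; then $\mathcal{W}|_{\partial B(0,1)}=0$, so $\mathcal{W}|_{B(0,1)}=w_{kl}$ by uniqueness, and continuity of the co--normal flux of $\mathcal{W}$ across $\partial B(0,1)$ gives \eqref{fed12}. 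The main obstacle is exactly this last step: there is no soft reason for the Neumann trace to vanish, and one must verify by a direct (but routine) computation that the particular values of $\widetilde{b_{1}},\widetilde{b_{2}},\widetilde{c}$ produced by the spherical Hashin--Shtrikman construction yield $h'(1)=\ell'(1)=0$, i.e.\ that the two moments of the extended source vanish.
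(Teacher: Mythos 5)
Your setup is sound and in fact mirrors the paper's: the rotational covariance argument yields exactly the ansatz used there ($w_{kl}=y_ky_l\,g(r)+\delta_{kl}\,h(r)$ with power-type radial profiles in each phase), the transmission conditions at $r=R$ and the Dirichlet condition at $r=1$ determine the constants uniquely, and your reduction of \eqref{fed12} to $h'(1)=\ell'(1)=0$ (equivalently, consistency of an overdetermined linear system, or vanishing of the flux jump of the zero extension \eqref{ll8} across $\partial B(0,1)$) is correct and is the same reduction the paper makes via \eqref{fed131}--\eqref{fed132}.

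However, the proof stops exactly where the actual content of the proposition begins. The whole point is that the two extra conditions $h'(1)=\ell'(1)=0$ hold \emph{only because} of the specific Hashin--Shtrikman values of $(\widetilde{b_1},\widetilde{b_2},\widetilde{c},m)$ in \eqref{PN2} and \eqref{ED9}; for generic $(\alpha,\beta,m)$ not linked by \eqref{ED9} the Neumann trace does not vanish, so, as you yourself note, there is no soft reason for it and a computation is unavoidable. You defer precisely this computation (``It remains to check this from the explicit constants \dots a direct (but routine) computation''), and your alternative route via the exterior problem has the same hole: the claim that the $\delta_{kl}$-- and $y_ky_l$--moments of the compactly supported source vanish ``by virtue of \eqref{ED9} together with \eqref{PN2}'' is asserted, not verified, and it is equivalent to the very identity you are trying to prove. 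The paper closes this gap explicitly in its Step 4: it solves the ten-unknown system with \eqref{fed131}, \eqref{fed132} in place of \eqref{fed82}, \eqref{fed112}, obtaining $d_2=\frac{N}{N+2}\widetilde{c}$, $p_2=-\frac{1}{N+2}\widetilde{c}\,\delta_{kl}$, $q_2=-\frac{N}{2(N+2)}\widetilde{c}\,\delta_{kl}$, $b_2=\frac{2}{N+2}\widetilde{c}$, $t_2=\frac12\widetilde{c}\,\delta_{kl}$, and then checks \eqref{fed82} and \eqref{fed112} using $\widetilde{b_2}-1=-\widetilde{c}$, $\widetilde{b_1}-1=\frac{1-\theta}{\theta}\widetilde{c}$, $m-\beta=-N\beta\widetilde{c}$. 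Until you carry out that verification (or an equivalent explicit check of the two moment conditions), the proposition is not proved.
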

\begin{proof}
The proof is divided into several steps. 
We begin with calculating the right hand side of the equation \eqref{auxN}.
\paragraph{Step 1)} \textbf{RHS of \eqref{auxN}: }
Following the definition of the 1st order operator $C^B_l$ and $w_{e_k}(y) = y_kf(r)$, we get
\begin{align*}
- C^B_l(w_{e_k}(y)-y_k) &=\ a(r)\frac{\partial}{\partial y_l} (w_{e_k}(y)-y_k) + \frac{\partial}{\partial y_l}(a(r)(w_{e_k}(y)-y_k))\\
                      &=\ 2a(r)(f(r)-1)\delta_{kl} + y_ky_l \Big( a(r)\frac{f^{\prime}(r)}{r} + \frac{(a(r)(f(r)-1))^{\prime}}{r} \Big).
\end{align*}
Or,
\begin{multline}\label{fed2}
a(r)\delta_{kl} - m\delta_{kl} -\frac{1}{2}\big( C^B_l(w_{e_k}(y)-y_k) +  C^B_k(w_{e_l}(y)-y_l) \big) \\
= a(r)\delta_{kl} - m\delta_{kl} + 2a(r)(f(r)-1)\delta_{kl} + y_ky_l \Big( a(r)\frac{f^{\prime}(r)}{r} + \frac{(a(r)(f(r)-1))^{\prime}}{r} \Big).
\end{multline}
The structure of RHS suggests the following ansatz for the solution of \eqref{auxN}:
\begin{equation*}
 w_{kl}(y) = y_ky_l g(r) + h(r). 
\end{equation*}
\textbf{LHS of \eqref{auxN}:} We have
\begin{align*}
 \frac{\partial w_{kl}}{\partial y_m}(y) = y_ky_l g^{\prime}(r)\frac{y_m}{r} + y_kg(r)\delta_{lm} + y_lg(r)\delta_{km} + h^{\prime}(r)\frac{y_m}{r}. 
\end{align*}
Consequently,
\begin{align*}
\frac{\partial}{\partial y_m} \Big( a(r)\frac{\partial w_{kl}}{\partial y_m}(y)\Big) =&\ y_ky_ly_m \Big(\frac{a(r)g^{\prime}(r)}{r}\Big)^{\prime}\ \frac{y_m}{r} + y_ky_l \frac{a(r)g^{\prime}(r)}{r}
+ y_k\delta_{lm}y_m\frac{a(r)g^{\prime}(r)}{r}\\
& +y_l\delta_{km}y_m\frac{a(r)g^{\prime}(r)}{r} + 
2\delta_{km}\delta_{lm}a(r)g(r) 
+ y_ky_m\delta_{lm}\frac{(a(r)g(r))^{\prime}}{r}\\
& + y_ly_m\delta_{km}\frac{(a(r)g(r))^{\prime}}{r}
+ \frac{a(r)h^{\prime}(r)}{r} + y_m\Big(\frac{a(r)h^{\prime}(r)}{r}\Big)^{\prime}\ \frac{y_m}{r},
\end{align*}
or\begin{equation}\label{fed3}\begin{aligned}
div (a(r)\nabla w_{kl}(y)) =&\ y_ky_l \Big( r \Big(\frac{a(r)g^{\prime}(r)}{r}\Big)^{\prime} + (N+2)\frac{a(r)g^{\prime}(r)}{r} + 2\frac{(a(r)g(r))^{\prime}}{r} \Big)\\ &+ 2a(r)g(r)\delta_{kl}
                            + N \frac{a(r)h^{\prime}(r)}{r} + r\Big(\frac{a(r)h^{\prime}(r)}{r}\Big)^{\prime}. 
\end{aligned}\end{equation}
\textbf{Step 2)}
Both LHS \eqref{fed3} and RHS \eqref{fed2} contain the quadratic term $y_ky_l$ and the constant term in $y$. Equating the corresponding coefficients, we get
\begin{equation}\label{fed4}
r \Big(\frac{a(r)g^{\prime}(r)}{r}\Big)^{\prime} + (N+2)\frac{a(r)f^{\prime}(r)}{r} + 2 \frac{(a(r)g(r))^{\prime}}{r} = - \Big[ a(r)\frac{f^{\prime}(r)}{r} + \frac{(a(r)(f(r)-1))^{\prime}}{r} \Big]
\end{equation}
and 
\begin{equation}\label{fed5}
 2a(r)g(r)\delta_{kl} + N \frac{a(r)h^{\prime}(r)}{r} + r\Big(\frac{a(r)h^{\prime}(r)}{r}\Big)^{\prime} = - \Big[  a(r)\delta_{kl} - m\delta_{kl} + 2a(r)(f(r)-1)\delta_{kl}\Big].
\end{equation}
We have
\begin{equation*}
 a(r) =
  \alpha \quad\mbox{and }\ \ f(r) =\ \widetilde{b_1} \quad\mbox{when } r < R \quad
\end{equation*}
and
\begin{equation*}
  a(r)  =\beta  \quad\mbox{and }\  \  f(r)=\ \widetilde{b_2} + \frac{\widetilde{c}}{r^N} \quad\mbox{when } R< r <1,
\end{equation*}
where ($\widetilde{b_1},\widetilde{b_2},\widetilde{c}$) are known in terms of $\alpha,\beta, N $ and $ \theta$ (see \eqref{PN2}).\\
\\
We further seek $h(r)$ and $g(r)$ in the general form of
\begin{align}\label{fed6}
g(r) =\ b + \frac{c}{r^N} + \frac{d}{r^{N+2}} \quad\mbox{and }\ h(r) =\Big (\frac{p}{r^N} + q r^2 + t\Big)\delta_{kl}.  
\end{align}
The set of constants $(b,c,d)$ and $(p,q,t)$ can take different values in 
the ranges $r < R $ and $ R< r<1$. We denote them
by $(b_1,c_1,d_1)$, $(p_1,q_1,t_1)$ and $(b_2,c_2,d_2)$, $(p_2,q_2,t_2),$ respectively.\\ 
Now, by using \eqref{fed6} in \eqref{fed4}, we get the following cases:\\

{\it Case 1.} When $r < R$, we have
\begin{multline*}
-N(N+2)\frac{c_1}{r^{N+2}} - (N+2)^2\frac{d_1}{r^{N+4}} + N(N+2)\frac{c_1}{r^{N+2}} \\+  (N+4)(N+2)\frac{d_1}{r^{N+4}}+ 2 \Big(- N\frac{c_1}{r^{N+2}} - (N+2)\frac{d_1}{r^{N+4}} \Big) = 0,
\end{multline*}
or
\begin{multline*}
\frac{c_1}{r^{N+2}} ( -N(N+2) + N(N+2) -2N ) 
\\
+ \frac{d_1}{r^{N+4}} ( -(N+2)^2 +(N+4)(N+2)- 2(N+2)) = 0,
\end{multline*}
or
\begin{equation*}
-2N\frac{c_1}{r^{N+2}} = 0, 
\end{equation*}
which implies $c_1 = 0$.

{\it Case 2.} When $ R < r <1,$ we have
\begin{align*}
-2N\frac{c_2}{r^{N+2}} = -2N\frac{\widetilde{c}}{r^{N+2}},
\end{align*}
which implies $c_2 = - \widetilde{c}$.
\\ 

Moreover, using \eqref{fed6} in \eqref{fed5}, we get\\

{\it Case 1.} When $ r < R$, we have
\begin{align*}
\alpha &\Big[ 2\Big(b_1 + \frac{d_1}{r^{N+2}}\Big)\delta_{kl} - N^2\frac{p_1}{r^{N+2}} + N(N+2)\frac{p_1}{r^{N+2}} + 2Nq_1 \Big] = - [\alpha - m + 2\alpha (\widetilde{b_1} -1)]\delta_{kl},\\
\mbox{or }\\
 & \alpha ( 2 d_1\delta_{kl} - N^2p_1 + N(N+2)p_1 )\frac{1}{r^{N+2}} + \alpha( 2b_1\delta_{kl} + 2N q_1 ) = - [ \alpha - m + 2\alpha (\widetilde{b_1} -1) ]\delta_{kl},
\end{align*}
which implies that
\begin{align}\label{fed71}
d_1\delta_{kl} + Np_1 &= 0 \\
\mbox{and}\ \ \  \alpha( 2b_1\delta_{kl} + 2N q_1 ) &= - [\alpha - m + 2\alpha (\widetilde{b_1} -1) ]\delta_{kl}.\label{fed72}
\end{align}
{\it Case 2.} When $R < r < 1$, we have
\begin{align*}
\beta \Big[ 2\big(b_2 + \frac{c_2}{r^N} + \frac{d_2}{r^{N+2}}\big)\delta_{kl} + 2N\frac{p_2}{r^{N+2}} + 2Nq_2 \Big] = - \Big[ \beta - m + 2\beta \big( \widetilde{b_2} -1 + \frac{\widetilde{c}}{r^N}\big) \Big]\delta_{kl},
\end{align*}
then, by using $c_2 = - \widetilde{c}$, it gives
\begin{align}\label{fed81}
 d_2\delta_{kl} + Np_2 &= 0 \\
\mbox{and}\ \  \beta( 2b_2\delta_{kl} + 2N q_2 ) &= - [ \beta - m + 2\beta (\widetilde{b_2} -1) ]\delta_{kl}.\label{fed82}
\end{align} 
\textbf{Step 3) Boundary Conditions: \\ 
i) Transmission conditions: }\\
\textit{a) Continuity of the $w_{kl}$ over the inner boundary at $r=R$:}
\begin{align}\label{fed91}
 b_1 + \frac{d_1}{R^{N+2}} &=\ b_2 + \frac{-\widetilde{c}}{R^N} + \frac{d_2}{R^{N+2}}, \\
 \Big(\frac{p_1}{R^N} + q_1R^2 + t_1\Big)\delta_{kl} &=\Big(\frac{p_2}{R^N} + q_2R^2 + t_2\Big)\delta_{kl}.\label{fed92}
\end{align}
\textit{b) Continuity of the flux over the inner boundary at $r=R$:}
We must rewrite the equation \eqref{auxN} in the following divergence form of 
\begin{align*}
-\frac{\partial}{\partial y_m}a(r)&\Big( \frac{\partial}{\partial y_m}w_{kl}(y) + \frac{1}{2}\big((w_{e_k}(y)-y_k)\delta_{lm} + (w_{e_l}(y) - y_l)\delta_{km}\big)\Big) \\
&=\ a(r)\delta_{kl} - m\delta_{kl} + \frac{1}{2}a(r)\Big( \frac{\partial}{\partial y_k} (w_{e_l}(y)-y_l) + \frac{\partial}{\partial y_l}(w_{e_k}(y)-y_k)\Big).    
\end{align*}
So, the boundary normal flux term, which we are concerned with, becomes\begin{align*}
&a(r)\Big( \frac{\partial}{\partial y_m}( w_{kl}(y) ) + \frac{1}{2}\big((w_{e_k}(y) - y_k)\delta_{km} + (w_{e_l}(y) - y_l)\delta_{km}\big)\Big)\cdot \nu \\
&= \ a(r)\Big( \frac{\partial}{\partial y_m}( y_ky_lg(r) +h(r) ) + \frac{1}{2}\big(y_k(f(r) - 1)\delta_{km} + y_l(f(r) - 1)\delta_{km}\big)\Big)\frac{y_m}{r} \\
&=\ y_ky_l\Big(  a(r)\Big( g^{\prime}(r)+ 2\frac{g(r)}{r} + \frac{f(r)-1}{r} \Big) \Big) + a(r)h^{\prime}(r). 
\end{align*}
Thus, from the required continuity of the boundary normal flux over the inner boundary at $r=R$, we get 
\begin{equation}\begin{aligned}\label{fed111}
\alpha \Big[ (b_1 + \frac{d_1}{r^{N+2}} )^{\prime} &+ 2\frac{(b_1 + \frac{d_1}{r^{N+2}})}{r} + \frac{(\widetilde{b_1}-1)}{r} \Big]|_{r=R} \\
&=\ \beta \Big[ \Big(b_2 + \frac{-\widetilde{c}}{r^N} + \frac{d_2}{r^{N+2}} \Big)^{\prime} + 2\frac{b_2 + \frac{-\widetilde{c}}{r^N} + \frac{d_2}{r^{N+2}}}{r} + \frac{\widetilde{b_2}+ \widetilde{-\widetilde{c}}{r^N} -1}{r} \Big]|_{r=R}\\
\end{aligned}\end{equation}
and
\begin{align}\label{fed112}
\alpha \Big( \frac{p_1}{r^N} + q_1r^2 +t_1 \Big)^{\prime}|_{r=R} \,\delta_{kl} =\ \beta \Big( \frac{p_2}{r^N} + q_2r^2 + t_2 \Big)^{\prime}|_{r=R}\, \delta_{kl}.
\end{align}
\textbf{ii) Dirichlet boundary condition:}
From the Dirichlet boundary condition of $w_{kl}$ on $\partial B(0,1),$ we get 
\begin{align}\label{fed101}
b_2 - \widetilde{c} + d_2  &= 0, \\
(p_2+ q_2 + t_2)\delta_{kl} &= 0. \label{fed102}
\end{align}
\textbf{Step 4)}
The unknown constants $(b_1,d_1),(p_1,q_1,r_1)$ and $(b_2,d_2),(p_2,q_2,t_2)$ can be found uniquely by solving equations \eqref{fed71} to \eqref{fed102}. There are $10$ unknown constants, $10$ linearly independent equations.  Here, $10$ coefficients
are uniquely determined, this confirms the already known fact, namely, unique solution to \eqref{auxN}. 
Now, we claim that the co-normal derivative of $w_{kl}$ on $\partial B(0,1)$ is zero, i.e.
\begin{equation}
\nabla w_{kl} (y)\cdot \nu = 0 \quad\mbox{on } \partial B(0,1).
\end{equation}
Above equation \eqref{fed12} is equivalent to two linear equations involving the coefficients $(b_2,d_2)$ and $(p_2,q_2,t_2)$
\begin{align}\label{fed131}
\Big[ (b_2 + \frac{-\widetilde{c}}{r^N} + \frac{d_2}{r^{N+2}} )^{\prime} + 2\frac{b_2 + \frac{-\widetilde{c}}{r^N} + \frac{d_2}{r^{N+2}}}{r}\Big]|_{r=1} &=0 \\
\mbox{and}
 \quad\quad\quad\quad\quad\quad\quad\quad\quad \quad \Big( \frac{p_2}{r^N} + q_2r^2 + t_2 \Big)^{\prime}\ |_{r=1}\ \delta_{kl} &=0.\label{fed132}
\end{align}
In order to establish our claim we have to show with the addition of these two new linear equations \eqref{fed131}, \eqref{fed132},
totally all these $12$ linear equations \eqref{fed71} to \eqref{fed132}
form a consistent system of $10$ unknown coefficients.
To this end, (for case of computation), we replace  \eqref{fed82}, \eqref{fed112} by 
\eqref{fed131}, \eqref{fed132} and we solve the resulting system of $10$ equations. Their solution
is then shown to satisfy \eqref{fed82}, \eqref{fed112} as well.\\
\\
First, we determine $d_2$ from \eqref{fed131} and consequently $p_2$ and $q_2$ from \eqref{fed81} and \eqref{fed132}, respectively, to get 
\begin{align}\label{fed141}
(N+2)d_2 - N\widetilde{c} =\ 0, &\quad\mbox{or}\ \ \ d_2 = \frac{N}{(N+2)}\widetilde{c},\\
 d_2\delta_{kl} + Np_2 =\ 0, &\quad\mbox{or}\ \ \ p_2 = - \frac{1}{(N+2)}\widetilde{c}\ \delta_{kl},\\
(-Np_2+ 2q_2) = 0, &\quad\mbox{or}\ \ q_2= -\frac{N}{2(N+2)}\widetilde{c}\ \delta_{kl}.
 \end{align}
Using them, we determine $b_2$ and $t_2$ from \eqref{fed101}, \eqref{fed102} respectively, to get
\begin{align}\label{fed15}
b_2 = \frac{2}{(N+2)}\widetilde{c},\quad t_2 =  \frac{1}{2}\widetilde{c}\ \delta_{kl} .
\end{align}
Next, we consider \eqref{fed91} and \eqref{fed111} to determine $(b_1, d_1)$ and  we get
\begin{equation*}\begin{aligned}
\alpha(N+2) b_1 =& \  ( \beta( \widetilde{b_2} + \frac{\widetilde{c}}{R^N} -1 ) - \alpha (\widetilde{b_1} -1 ) ) + \alpha\Big(\frac{2}{(N+2)} - \frac{1}{R^N} + \frac{N}{(N+2)}\frac{1}{R^{N+2}} \Big)\widetilde{c}\\
                 & +  \beta \Big( \frac{4}{(N+2)} + \frac{(N-2)}{R^N} - \frac{N^2}{(N+2)}\frac{1}{R^{N+2}} \Big)\widetilde{c}
\end{aligned}\end{equation*}
and
\begin{equation*}\begin{aligned}
\alpha(N+2)\frac{d_1}{R^{N+2}} =& -( \beta( \widetilde{b_2} + \frac{\widetilde{c}}{R^N} -1 ) - \alpha (\widetilde{b_1} -1 ) ) + 2\alpha\Big(\frac{2}{(N+2)} - \frac{1}{R^N} + \frac{N}{(N+2)}\frac{1}{R^{N+2}} \Big)\widetilde{c}\\
                                  &-  \beta \Big( \frac{4}{(N+2)} + \frac{(N-2)}{R^N} - \frac{N^2}{(N+2)}\frac{1}{R^{N+2}} \Big)\widetilde{c}.
\end{aligned}\end{equation*}
Successively, we can find $p_1$, $q_1 $ and $t_1$ from \eqref{fed71}, \eqref{fed72}
and \eqref{fed92}, respectively. \\
\\
Thus we have determined all $10$ coefficients and it is remained to check that the solutions obtained
above satisfies \eqref{fed82} and \eqref{fed112}. We recall \eqref{PN2} to write as
\begin{align*}
 \widetilde{b_2} -1 &=\ -\widetilde{c} \quad\mbox{and }\ \widetilde{b_1} -1 =\ \frac{(1-\theta)}{\theta}\widetilde{c}, \quad\mbox{where } \theta= R^N,\\
 \ m -\beta &=\ -N\beta\widetilde{c} \quad\mbox{and } \ m-\alpha = \ \frac{(1-\theta)((N-1)\beta +\alpha)}{\theta}\widetilde{c}.
\end{align*}
Now, let us see that LHS of \eqref{fed82} is equal to 
\begin{equation*}
\beta( 2b_2\delta_{kl} + 2N q_2 ) =\ \beta ( 2\frac{2}{(N+2)}\widetilde{c}\delta_{kl} - 2N\frac{N}{2(N+2)}\widetilde{c} ) = \beta (2-N)\widetilde{c}\delta_{kl}
\end{equation*}
and RHS of \eqref{fed82} is equal to
\begin{equation*}
-(\beta-m)\delta_{kl} - 2\beta(\widetilde{b_2} -1 )\delta_{kl} =\ -N\beta \widetilde{c}\delta_{kl} + 2\beta\widetilde{c}\delta_{kl}=\ \beta (2-N)\widetilde{c}\delta_{kl},
\end{equation*}
which is exactly  equal to the LHS of \eqref{fed82}.

Next,  we consider \eqref{fed112} and we have that:
\begin{align*}
\mbox{LHS of }&\eqref{fed112}:=\ \alpha\Big(\frac{p_1}{r^N} + q_1r^2 +t_1 \Big)^{\prime}\ |_{r=R}\ \delta_{kl} =\ \alpha\Big( -\frac{Np_1}{R^{N+1}} + 2q_1R \Big)\ =\ R\alpha\Big(\frac{d_1}{R^{N+2}} + 2q_1 \Big)\\
&=\ R\Big( \alpha\frac{d_1}{R^{N+2}} + \frac{-2\alpha b_1 - (\alpha -m) -2\alpha(\widetilde{b_1}-1)}{N}\delta_{kl} \Big)\ =\ R \beta \frac{N}{N+2} \Big(\frac{1}{R^{N+2}} -1 \Big)\widetilde{c}.\\
\mbox{RHS of }&\eqref{fed112}:=\ \beta \Big(\frac{p_2}{r^N} + q_2r^2 +t_2 \Big)^{\prime}\ |_{r=R}= \ \beta\Big( -\frac{Np_2}{R^{N+1}} + 2q_2R \Big)=\ R\beta \frac{N}{N+2} \Big(\frac{1}{R^{N+2}} -1 \Big)\widetilde{c}\\
              &\quad\quad\quad= \ \mbox{LHS of }\eqref{fed112}.
\end{align*}
Therefore, all these $12$ linear equations \eqref{fed71} to \eqref{fed112} form a consistent 
system for $10$ variables and which establishes our claim of 
having zero Neumann data together with zero Dirichlet data on the boundary of the unit ball. 
\hfill
\end{proof}
\paragraph{Resolution of \eqref{dl19}:}
Then, as we did before (see \eqref{ll6}), using $w_{kl}\in H^1_0(B(0,1))$ with $\nabla w_{kl}\cdot\nu = 0 $ on $\partial B(0,1)$,
we will define $\chi_{kl}\in H^1_{\#}(Y)$ in order to solve \eqref{dl19}. First, we define
$\widetilde{\chi_{kl} }\in H^1_{loc}(\mathbb{R}^N)$ by
\begin{align*}
\widetilde{\chi_{kl}}(y) = \varepsilon_{p}^2\ w_{kl}\Big(\frac{y-y^{p}}{\varepsilon_{p}}\Big)\ \ \mbox{ in }\ B(y^p,\varepsilon_p)\ \mbox{ a.e. in }\ \mathbb{R}^N,
\end{align*}
solving
\begin{equation}\label{auxN2} -div(a_{\mathbb{R}^N}(y)\nabla \widetilde{\chi_{kl}}(y)) = a_{\mathbb{R}^N}(y)\delta_{kl} - m\delta_{kl} - \frac{1}{2}\lb C_{l}(\chi_k) + C_{k}(\chi_l)\rb \ \mbox{ in }\mathbb{R}^N. \end{equation}
Then, we conclude that $\widetilde{\chi_{kl}}$ is a $Y$-periodic function and in order to get $\chi_{kl} \in H^1_{\#}(Y)$ with $\int_Y \chi_{kl} dy = 0 $ solving \eqref{auxN2} in $Y$,
we simply define $ \chi_{kl}(y) = \widetilde{\chi_{kl}}(y) - \frac{1}{|Y|}\int_Y \widetilde{\chi_{kl}} dy $, by restricting it in $Y$.
Subsequently, one has $X^{(2)}_Y= \eta_k\eta_l\chi_{kl}$, which solves the cell-problem \eqref{dl19}. 

\paragraph{Expression for $d_{PHS}$:}
Hence, the integral identity \eqref{dphs} 
of $d_{PHS}$ becomes
\begin{equation*}\begin{aligned}
-|Y|\cdot d_{PHS}\ \eta^4 &=\ \int_{Y} \mathcal{A}_Y\Big( X^{(2)}_Y -\frac{(X^{(1)}_Y)^2}{2}\Big)\cdot\Big( X^{(2)}_Y -\frac{(X^{(1)}_Y)^2}{2}\Big) dy\\
                  &= \sum_p \varepsilon_p^{N+2}\cdot \int_{B(0,1)} \mathcal{A}_{B(0,1)} \Big( X_{B(0,1)}^{(2)} - \frac{(X_{B(0,1)}^{(1)})^2}{2}\Big) \cdot\Big( X_{B(0,1)}^{(2)} - \frac{(X_{B(0,1)}^{(1)})^2}{2}\Big) dy.
\end{aligned}\end{equation*}
\textbf{Elimination of }$X_{B(0,1)}^{(2)}$:
We simplify the above expression to express it solely depending on $X^{(1)}_{B(0,1)}$ by eliminating $X^{(2)}_{B(0,1)}$.
Let us define 
$\widetilde{X}_{B(0,1)} = X^{(2)}_{B(0,1)} -\frac{(X^{(1)}_{B(0,1)})^2}{2}$, then  
using \eqref{ED7} and \eqref{auxN}, we write
\begin{equation}\label{phs3}
-div(a_B(y)\nabla \widetilde{X}_{B(0,1)}(y) )= div \Big( a_B(y) \nabla \frac{(X_{B(0,1)}^{(1)}(y) +\eta\cdot y)^2}{2} \Big) - \widetilde{m}\quad\mbox{ in }\ B.
\end{equation} 
It simply follows that 
\begin{equation*}\begin{aligned}
div\Big( a_B(y)\nabla\frac{(w_{e_k})^2}{2}\Big) &= \ div\big(a_B(y)\nabla(w_{e_k} - y_k)\big)\cdot(w_{e_k} - y_k) + a_B(y)\nabla(w_{e_k} - y_k)\cdot\nabla(w_{e_k} - y_k)\\
                           &= \ -div (a_B(y)e_k)\cdot(w_{e_k} - y_k) + a_B(y)\nabla(w_{e_k} - y_k)\cdot\nabla(w_{e_k} - y_k)\\
                           &=\ a_B(y)(\nabla(w_{e_k} - y_k) + e_k)\cdot(\nabla(w_{e_k} - y_k) + e_k) - a_B(y) + C^B_k(w_{e_k} - y_k) \\
                           &= \ div \Big( a_B(y) \nabla \frac{((w_{e_k} - y_k) + y_k)^2}{2} \Big) - a_B(y) + C^B_k(w_{e_k} - y_k).
\end{aligned}\end{equation*}
Now, multiplying  \eqref{phs3} by $\widetilde{X}_{B(0,1)}$, doing integration 
by parts and using  the fact that $X^{(1)}_{B(0,1)}= X^{(2)}_{B(0,1)}=0$ on $\partial B(0,1)$, we get
\begin{equation*}\begin{aligned}
|Y|\cdot d_{PHS}\ \eta^4 =  \ \sum_p \varepsilon_p^{N+2}\Bigg[\int_{B(0,1)} a_B(y)\nabla \frac{(X^{(1)}_{B(0,1)}+\eta\cdot y)^2}{2}&\cdot\nabla\Big( X^{(2)}_{B(0,1)} -\frac{(X^{(1)}_{B(0,1)})^2}{2}\Big) \\                                    - &\widetilde{m}\int_{B(0,1)} \frac{(X^{(1)}_{B(0,1)})^2}{2} dy\Bigg].
\end{aligned}\end{equation*}
Then, multiplying \eqref{phs3} by $\frac{(X^{(1)}_{B(0,1)}+\eta\cdot y)^2}{2}$, doing the integration by parts and using the fact that 
$X^{(1)}_{B(0,1)}=0$ and $\nabla X^{(2)}_{B(0,1)}\cdot\nu=0$ on $\partial B(0,1)$, we finally get
\begin{equation}\label{phsd}\begin{aligned}
-d_{PHS}\ \eta^4\cdot|Y| = \ \sum_p \varepsilon_p^{N+2}\Bigg[\int_{B(0,1)} a_B(y)&\nabla \frac{(X^{(1)}_{B(0,1)}+\eta\cdot y)^2}{2}\cdot \nabla \frac{(X^{(1)}_{B(0,1)}+\eta\cdot y)^2}{2} dy\\
                                                           + &\widetilde{m}\int_{B(0,1)}\frac{(X^{(1)}_{B(0,1)})^2}{2} dy\Bigg].
\end{aligned}\end{equation}
\textbf{Final expression for $d_{PHS}$:}
Moreover, due to explicit formula of the solution $X^{(1)}_{B(0,1)}= \eta_k(w_{e_k}(y)-y_k)\ $ (see \eqref{ED5}) and taking $\eta = e_k$, we express \eqref{phsd} as follows:
\begin{equation}\begin{aligned}\label{frN}
-2|Y|\cdot d_{PHS}=\ \sum_p &\varepsilon_p^{N+2} \cdot \ \Bigg[ \int_{B(0,R)}{\widetilde{b_1}}^2(m-\alpha {\widetilde{b_1}}^2) y_k^2\ dy \\
 & + \int_{B(0,1)\smallsetminus B(0,R)} \Big( m-\beta\big( \widetilde{b_2} + \frac{\widetilde{c}}{r^N}\big)^2 \Big|\nabla \Big(y_k\big(\widetilde{b_2} +\frac{\widetilde{c}}{r^N}\big)\Big) \Big|^2 \Big) y_k^2\ dy \Bigg], 
\end{aligned}\end{equation} 
where $\widetilde{b_1}$, $\widetilde{b_2}$, $\widetilde{c}$ are known in terms of given data $\alpha, \beta, \theta$ and $N$ found in \eqref{PN2}.
\noindent
\begin{remark}
It is already known from the expression \eqref{dphs} (or \eqref{phsd})
that the dispersion tensor $d_{PHS}$ is a non-positive definite tensor. 
Moreover, the expression \eqref{phsd} tells us that $d_{PHS}$ depends upon only 
on the scales $\{ \varepsilon_p \}_{p\in K}$, not on the translations $\{ y^p \}_{p\in K}$. 
The last aspect
signifies the following property of $d_{PHS}$: position of balls in a
micro-structure is not important; what matters is only their radii. Thus, two
Hashin-Shtrikman micro-structures consisting of different arrangements of 
core-coating balls, but having the same set of radii, have the same value of 
the Bloch dispersion coefficient $d_{PHS}$. This aspect of Hashin-Shtrikman micro-structure
is not obvious to start with. Thus, even though $d_{PHS}$ varies among Hashin-Shtrikman
structures, its variation is somewhat
special: it does not depend on position of centers of balls in the micro-structure;
it depends only on radii. Recall that the homogenized coefficient $m$ depends only on $\{\alpha,\beta,\theta,N\}$,
but independent of radii $\{\varepsilon_p\}_{p\in K}$ and centers $\{y^p\}_{p\in K}$.
\end{remark}

The above computation reduced the original problem which was posed on micro-structures, to space 
of sequences $l^1$ (cf. `Conjecture' at the end of the Section 3). As long as the macro quantity $(\widetilde{b_1}, \widetilde{b_2},\widetilde{c})$
are getting fixed through \eqref{PN2}, from the expression \eqref{frN} we can compute
$d_{PHS}$ explicitly and moreover, due to the negativity of the dispersion tensor, it
will be maximized or minimized whenever $\underset{p}{\sum} \varepsilon_p^{N+2}$ is 
got minimized or maximized, respectively under the constraint 
$\underset{p}{\sum} \varepsilon_p^N = c_N$ (a dimension constant).
As a next step, we exploit the properties of $l^1$ to prove existence of minimizers.  

\section{Proof of the Conjecture}
\setcounter{equation}{0}

Let us consider a Vitali covering of $Y=[0,1]^N$ with a countable infinite union 
of disjoint balls with center $y^p$ and radius $\varepsilon_p$, where
$p\in K$, i.e. $meas\big(Y\smallsetminus  \underset{p\in K}{\cup} (B(y^p,\varepsilon_p)\cap Y)\big)=0$.
We first rearrange the sequence $\{\varepsilon_{p}\}_{p\in K}$ to make it as a decreasing sequence, 
i.e., $\varepsilon_1 \geq \varepsilon_2 \geq \ldots \geq \varepsilon_p \geq \ldots $ Let us define $d_p = \frac{\varepsilon_p^N}{c_N} $, with  $d_1 \geq d_2 \geq \ldots \geq d_p \geq \ldots $.
Then, we want to minimize $I = -c_N^{\frac{N+2}{N}}\underset{p}{\sum} d_p^{\frac{N+2}{N}} $,
under the constraint $\underset{p}{\sum} d_p = 1$:
\begin{center}
\textit{Minimization of $I = -c_N^{\frac{N+2}{N}}\underset{p}{\sum} d_p^{\frac{N+2}{N}} $ under the constraint $\underset{p}{\sum}d_p = 1$. }
\end{center}
\noindent
\textbf{Difficulties with the minimization problem:}\\
The peculiarity of our problem is that it is concerned with a (constrained) minimization of a strictly
concave functional over the unit sphere of $l^1$ (the unit sphere
representing the constraint set). From the point of view of Functional
Analysis, difficulties with the existence of a minimizer are
well-known, owing to the non-reflexivity of $l^1$. In general, working in such
spaces, a bounded sequence may not have any weakly converging subsequence
and even if it has one, we cannot conclude it satisfies the
constraint.\\
\\
\textbf{Existence of Minimizers:}\\
Fortunately, in our case, the criterion of de la Vallee Pousin \cite[Page no. 19]{MP}
is applicable and it guarantees $l^1$-weak compactness of a
minimizing sequence. It is also known that weak and norm convergences are
equivalent in the case of $l^1$. These arguments establish that any
minimizing sequence has a converging subsequence in $l^1$ and, so, a minimizer
satisfying the constraint exists.\\
\\
\textbf{Uniqueness Issue:}\\
However, uniqueness is an issue since we have a strictly
concave functional to minimize. Uniqueness of the minimizer can however be
proved using other arguments  below. Combining both  results, we
obtain that the entire minimizing sequence is $l^1$ strongly convergent.
It is not clear how uniqueness can be proved by analytical method.
However, with a geometric point of view, we can settle both existence and
uniqueness of minimizer. This is done in the sequel:\begin{figure}
 \begin{center}
  \includegraphics[width = 6cm]{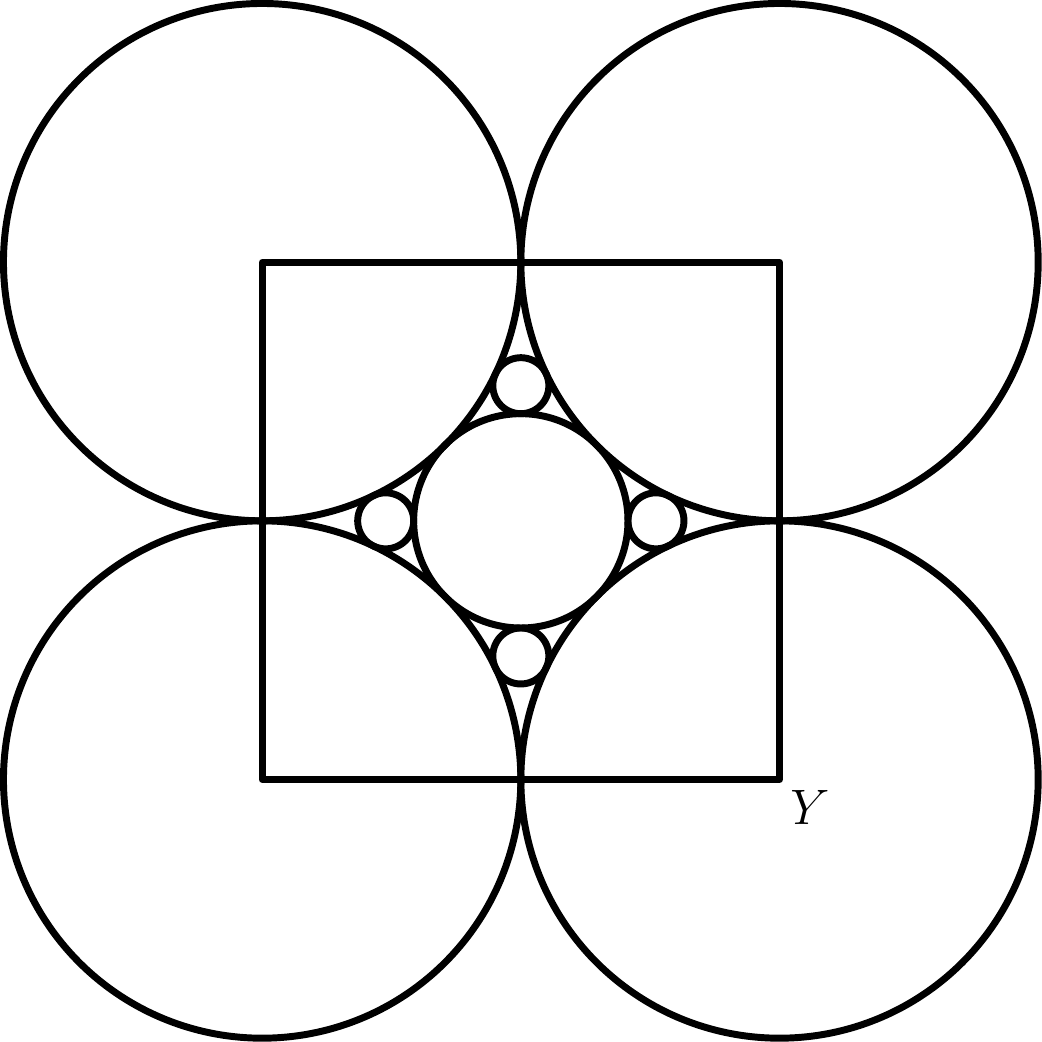}
  \caption{Apollonian-Hashin-Shtrikman structures in $Y$.}
 \end{center}
\end{figure}
\paragraph{Existence and Uniqueness via Geometrical Method:}
\noindent
For a geometrical picture, we ask the reader to imagine the flat torus 
obtained by identifying the opposite sides of the cell $Y=[0,1]^N$. 
We have already seen that the dispersion coefficient $d_{PHS}$ is 
invariant under translation, and as $Y$ is identified with $\mathbb{R}^N$ through $\mathbb{Z}^N-$translation invariance.
Then, we first find a Hashin-Shtrikman construction to cover the whole space $\mathbb{R}^N$ and if it is invariant 
under $\mathbb{Z}^N-$translations, then we will consider it as a Hashin-Shtrikman structure for $Y$ and conversely.
Note that it is enough to consider decreasing sequences of 
non-negative numbers in the minimization process. These numbers represent 
the radii of balls in the Hashin-Shtrikman micro-structure.
Finding the first element of the minimizer, being the
highest, amounts to putting a ball with maximum radius $\big(\varepsilon_1^{*} = \frac{1}{2}\big)$
inside the torus. It is geometrically clear that this ball (and hence its radius) is uniquely
determined. In the second step, the same pattern is repeated: the second
element of the minimizer, being the next highest, represents the radius $\big(\varepsilon_2^{*} = \frac{\sqrt{2}-1}{2}\big)$
of the biggest ball embedded in the complement of the previous
ball. Again, this is unique. In the third step, we observe that there
is no uniqueness and in fact there are four balls of maximum radii
$\big(\varepsilon_3^{*} = \frac{(\sqrt{2}-1)(2\sqrt{2}-1)}{14}\big)$,
which can be placed in the complement of the union of the first and the second
balls. The radii of these four balls are however equal. This amount to
saying that the third, fourth, fifth, sixth elements of the minimizer are
equal. The above argument can be repeated at every subsequent step and this
procedure identifies the \textit{Apollonian-Hashin-Shtrikman} micro-structures (Figure 2) 
as the unique solution of our geometric problem. The radii $\{\varepsilon_p^{*}\}_{p\in K}$ of the balls thus 
obtained provide the minimizer for our constrained minimization problem. 
Hence, we denote the minimum value as $I_{min}= -c_N^{\frac{N+2}{N}} \underset{p}{\sum} (\varepsilon_p^{*})^{N+2}$.    
\paragraph{Optimal bounds on $I$:}
We have found the minimum value of $I$ with its unique minimizer, next we find its maximum value under the constraint $\sum_p d_p =1$. We simply see that
\begin{center}
$I = -\ c_N^{\frac{N+2}{N}}\underset{p}{\sum} d_p^{\frac{N+2}{N}} \leq -c_N^{\frac{N+2}{N}}d_1^{\frac{2}{N}}. $
\end{center}
Clearly, $d_1>0$ can be chosen arbitrarily small. Thus, $0$ is the supremum value of $I$
and it is not the maximum value of $I$. So, unlike to the previous case of minimization, here the maximizer doesn't 
exist in the classical micro-structures. In particular, we have a bound for $I$, that is, $I \in [I_{min},0)$. 
\hfill
\qed

\subsection*{Acknowledgments} The authors thank support from ECOS-CONICYT Grant~C13E05.
The second author is partially supported by PFBasal-001 and PFBasal-003 projects and by Fondecyt Grant~N$^\circ$1140773. PFBasal-001 also partially supports the fourth author.

\bibliographystyle{plain}
\bibliography{Master_bibfile}
\end{document}